\newdimen\AAdi%
\newbox\AAbo%
\def\AAk#1#2{\s_etbox\AAbo=\hbox{#2}\AAdi=\wd\AAbo\kern#1\AAdi{}}%
\def\AAr#1#2#3{\s_etbox\AAbo=\hbox{#2}\AAdi=\ht\AAbo\raise#1\AAdi\hbox{#3}}%
\font\tenmsb=msbm10 at 12pt \font\sevenmsb=msbm7 at 8pt
\font\fivemsb=msbm5 at 6pt
\newtheorem{theorem}{Theorem}
\newtheorem{remark}[theorem]{Remark}
\newtheorem{proposition}[theorem]{Proposition}
\numberwithin{equation}{section} \numberwithin{theorem}{section}
\renewcommand{\topmargin}{0cm}
\renewcommand{\oddsidemargin}{5mm}
\renewcommand{\evensidemargin}{5mm}
\renewcommand{\textwidth}{150mm}
\renewcommand{\textheight}{230mm}
\def\R{\mathbb R}
\def\na{\nabla}
\def\grs#1#2{\bold G_{#1,#2}}
\def\dt#1{\frac {d\,#1}{d\,t}}
\def\a{\alpha}
\def\p#1{\partial #1}
\def\De{\Delta}
\def\e{\eta}
\def\ep{\varepsilon}
\def\eps{\epsilon}
\def\g{\gamma}
\def\lan{\langle}
\def\ran{\rangle}
\begin{document}

\title[curvature estimates of  ancient solutions]
{curvature estimates of ancient solutions to the mean curvature flow of  higher codimension with convex Gauss image}

\author{Hongbing Qiu}

\address{School of Mathematics and Statistics, Wuhan University, Wuhan 430072,
China}
 \email{hbqiu@whu.edu cn}

\author{Y.L. Xin}
\address{Institute of Mathematics, Fudan University, Shanghai 200433, China}
\email{ylxin@fudan.edu.cn}

 \thanks{
 The first author is partially supported by NSFC (No. 11771339) and Hubei Provincial Natural Science Foundation of China (No. 2021CFB400).  }

\begin{abstract} By carrying out  refined curvature estimates, we prove better rigidity theorems
 of complete noncompact ancient solutions to the mean curvature flow in higher codimension under various Gauss image restriction.

\vskip12pt

\noindent{\it Keywords and phrases}:   ancient solution, curvature estimate, rigidity.

\noindent {\it MSC 2020}:  53C24, 53E10

\end{abstract}


 \maketitle

\section{Introduction}

Let $F: M^{n} \rightarrow \mathbb{R}^{m+n}$ be an isometric
immersion from an $n$-dimensional oriented Riemannian manifold $M$
to the Euclidean space $\mathbb{R}^{m+n}$. The mean curvature flow
(MCF) in Euclidean space is a one-parameter family of immersions
$F_t= F(\cdot, t): M^n \rightarrow \mathbb{R}^{m+n}$ with the
corresponding images $M_t=F_t(M)$ such that
\begin{equation}\label{eqn-MCF}
\begin{cases}\aligned
\frac{\partial}{\partial t}F(x,t)=& H(x,t), x\in M,\\
F(x,0)=&F(x),
\endaligned
\end{cases}
\end{equation}
is satisfied, where $H(x, t)$ is the mean curvature vector of $M_t$
at $F(x, t)$ in $\mathbb{R}^{m+n}$.

Singularity issue of the MCF is important. Besides self-shrinkers and translating solitons, the  ancient solutions also relate to the issue. By definition, an ancient solution to the mean curvature flow is a solution
 which is defined on a time interval of the form $(-\infty, T_0)$ with $T_0 < \infty$.
 These solutions typically arise as tangent flows near singularities  and model the
  asymptotic profile of the mean curvature flow near a singularity (see \cite{HuiSin199}\cite{HuiSin299}).
   Since the diffusion has had an infinite amount of time to take effect, ancient solutions can be
   expected to exhibit rigidity phenomena. There have been plenty of works for compact ancient solutions
    of the mean curvature flow\cite{ADS19}\cite{ADS20}\cite{BIS17}\cite{BL16}\cite{BLT21}\cite{CM19}\cite{CM22}\cite{DHS10}\cite{HH16}\cite{HS15}\cite{Lan17}\cite{Lan17a}\cite{LL20}\cite{LN21}\cite{LXZ21}\cite{SS19}\cite{Wan11a}\cite{Whi03}.

For the noncompact ancient solutions, Brendle-Choi \cite{BC19}
showed that the rotationally symmetric bowl soliton is the only
noncompact ancient solution to the mean curvature flow in
$\mathbb{R}^3$ which is strictly convex and non-collapsed.
Afterward, they \cite{BC21} generalized this result to higher
dimensions under an additional assumption of uniform two-convexity.
Recently, Haslhofer and coauthors \cite{CH21}\cite{DH21}\cite{DH21b}
gave the classification of the ancient noncollapsed flows.  In
\cite{Kun16}, the author showed that there are no nontrivial ancient
solutions with bounded slope and bounded mean curvature in
codimension one. This result was generalized  to arbitrary
codimension by Guan-Xu-Zhao \cite{GXZ23}, and  they obtained that if
the $v$-function satisfies $v\leq v_1 < \sqrt{2}$ and the mean
curvature is bounded, then any ancient solution in higher
codimension has to be an affine subspace. The precise
definition of the slope function $v$ will be given in section 2. See also the related
work by the first author \cite{Qiu22}.

In this paper, we study the noncompact ancient solutions to the mean
curvature flow in higher codimension. Some kind of convexity
conditions for the codimension one case are essential for all
previous work mentioned above. Nevertheless, convexity condition is
not easy to generalized to higher codimension. On the other hand, it
is natural to consider some kind of convex Gauss image situation.
For a submanifold in the Euclidean space the target manifold of the
Gauss map is the Grassmannian manifold. We already have the largest
geodesic convex subset  $B_{JX}(P_0)$ of the Grassmannian
manifold \cite{JX}. With the aid of analytic method
 we can carry out  point-wise curvature estimates, as shown in (\ref{es1}) for ancient solution to the mean curvature flow under the Gauss image
contained in a compact subset of $B_{JX}(P_0)$.  This leads to a
rigidity result of ancient solutions. The Gauss image restriction
can also be described by upper bound of the slope function $v$. We
know that  the domain of $v < 2$ in the Grasmannian manifold is
contained in $B_{JX}(P_0)$ \cite{JX}. In fact, we can do better.
Namely, under the slope function  satisfying $v\le v_0<3$ we can
also carry out curvature estimates and obtain corresponding rigidity
results, see Theorem \ref{thm5}, Theorem \ref{thm-anc6}  and Theorem
\ref{thm-anc7}.

\bigskip

\section{Preliminaries}
\medskip

 The Grassmannian manifold $\grs{n}{m}$ can be viewed as a submanifold of some Euclidean space via the Pl${\ddot{\rm u}}$cker embedding.
  The restriction of the Euclidean inner product on $\grs{n}{m}$ is denoted by $w: \grs{n}{m} \times \grs{n}{m} \to \mathbb{R}$
\[
w(P, Q) = \lan e_1 \wedge\cdot\cdot\cdot \wedge e_n, f_1\wedge
\cdot\cdot\cdot \wedge f_n \ran = \det W,
\]
where $\{ e_1, \cdot\cdot\cdot, e_n \}$ is an oriented orthonormal
basis of $P$, $\{ f_1, \cdot\cdot \cdot, f_n\}$ is an oriented
orthonormal basis of $Q$ and $W=(\lan e_i, f_j \ran)$.  It is well
known that
\[
W^TW=O^T\Lambda O
\]
with $O$ an orthogonal matrix and $\Lambda = {\rm diag}(\mu_1^{2},
\cdot\cdot\cdot, \mu_n^{2})$. Here each $0\leq \mu_i^{2}\leq 1$.
Putting $p:= \min\{ m, n \}$, then at most $p$ elements in $\{
\mu_1^{2}, \cdot\cdot\cdot, \mu_n^{2} \}$ are not equal to 1.
Without loss of generality, we can assume $\mu_i^{2} = 1$ whenever
$i>p$. We also note that the $\mu_i^{2}$ can be expressed as
\[
\mu_i^{2} = \frac{1}{1+\lambda_i^{2}}
\]
with $\lambda_i \in [0, +\infty)$.

The Jordan angles between $P$ and $Q$ are defined by
\[
\theta_i = \arccos (\mu_i), \quad 1\leq i \leq p.
\]
The distance between $P$ and $Q$ is defined by
\[
d(P, Q) = \sqrt{\sum \theta_i^{2}}.
\]
It is a natural generalization of the canonical distance of
Euclidean sphere. Thus we have
\[
\lambda_i = \tan\theta_i.
\]
In the sequel, we shall assume $m\geq n$ without loss of generality.

Now we fix $P_0 \in \grs{n}{m}$. We represent it by the $n$-vector
$\ep_1\wedge\cdot\cdot\cdot \wedge \ep_i \wedge \cdot\cdot\cdot
\ep_n$. We choose $m$ vectors $\ep_{n+\a}$, such that $\{ \ep_i,
\ep_{n+\a} \}$ form an orthonormal basis of $\mathbb{R}^{m+n}$.
Denote
\[
\mathbb{U}:= \{ P \in \grs{n}{m}: w(P, P_0) > 0 \}.
\]
The $v$-function will be
\[
v(\cdot, P_0) := w^{-1}(\cdot, P_0) \quad {\rm on} \quad \mathbb{U}.
\]
For arbitrary $P \in \mathbb{U}$, it is easy to see that
\[
v(P, P_0) =  \prod_{\a=1}^n \sec \theta_\a = \prod_{\a=1}^n
\frac{1}{\mu_\a},
\]
where $\theta_1, \cdot\cdot\cdot, \theta_n$ denotes the Jordan
angles between $P$ and $P_0$.

For convenience, we define a subset in $\mathbb{U}$ by
\[
\mathbb{U}_2:=\{ P\in \mathbb{U}: v(P, P_0)< 2 \}.
\]

In \cite{JX}, Jost and the second author found the largest geodesic
convex subset $B_{JX}(P_0)$, which is defined by sum of any two
Jordan angles being less than $\frac{\pi}{2}$ for any point $P\in
B_{JX}(P_0)$. It is easy to see that
\begin{equation}\label{eqn-LGCS}
\mathbb{U}_2\subset B_{JX}(P_0)=\{ P\in \mathbb U:\lambda_i\lambda_j
< 1 \quad {\rm for} \quad {\rm every} \quad i\neq j \}
\end{equation}

If $M^n$ is an oriented submanifold in $\mathbb{R}^{m+n}$, we can
define the Gauss map $\gamma: M \to \grs{n}{m}$ that is obtained by
parallel translation of $T_pM$ to the origin in the ambient space
$\mathbb{R}^{m+n}$. Then we can define natural functions on $M^n$
from functions on $\grs{n}{m}$ via the Gauss map:
\[
w:=w(\cdot, P_0)\circ \gamma, \qquad v:=v(\cdot, P_0)\circ \gamma.
\]
In particular, $v$ is called the slope function.
\bigskip

Let $\na$ and $\overline{\na}$ be the Levi-Civita connections on
$M^n$ and $\R^{m+n}$, respectively. The second fundamental form $B$
of $M^{n}$ in $\mathbb{R}^{m+n}$ is defined by
\[
B_{UW}:= (\overline{\na}_U W)^N
\]
for $U, W \in \Gamma(TM^n)$. We use the notation $( \cdot )^T$ and
$( \cdot )^N$ for the orthogonal  projections into the tangent
bundle $TM^n$ and the normal bundle $NM^n$, respectively. For $\nu
\in \Gamma(NM^n)$ we define the shape operator $A^\nu: TM^n
\rightarrow TM^n$ by
\[
A^\nu (U):= - (\overline{\na}_U \nu)^T
\]
Taking the trace of $B$ gives the mean curvature vector $H$ of $M^n$
in $\mathbb{R}^{m+n}$ and
\[
H:= \hbox{trace} (B) = \sum_{i=1}^{n} B_{e_ie_i},
\]
where $\{ e_i \}$ is a local orthonormal frame field of $M^n$.

\bigskip

Let $B_R(o)$ be a Euclidean closed ball of radius $R$ centered at
the origin $o \in \mathbb{R}^{m+n}$ and $B_{R, T}(o):= B_R(o) \times
[-T, 0]  \subset \mathbb{R}^{m+n}\times (-\infty, +\infty)$. We may
consider $\mathcal{M}_T$ as the space-time domain
\[
\{ (F(p, t), t): p\in M^n, t\in [-T, 0] \} \subset \mathbb{R}^{m+n}
\times (-\infty, \infty).
\]
Let $D_{R, T}(o) = \{ (x, t) \in M^n \times [-T, 0]: F(x, t) \in
B_R(o) \}.$

\bigskip

\section{Evolution equations}

\medskip

By Lemma 3.1 in \cite{Xin08}, we have
\begin{equation}\label{eqn-VLap-var22bb}\aligned
\left( \De - \p_t \right) |B|^2\geq 2|\na |B||^2 - 3|B|^4.
\endaligned
\end{equation}

We have the following equation which is the parabolic version of the well-known Ruh-Villms' theorem.
\begin{proposition}(see \cite{Wan03})
\begin{equation}
\dt{\g}=\tau(\g(t)).\label{prv}
\end{equation}
\end{proposition}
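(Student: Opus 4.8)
The plan is to compute both sides of \eqref{prv} in local coordinates and to identify them through the standard Ruh--Vilms description of the differential of the Gauss map. Fix local coordinates $x^1,\dots,x^n$ on $M^n$ and write $F_i=\partial F/\partial x^i$, so that $g_{ij}(t)=\langle F_i,F_j\rangle$ is the induced metric at time $t$ and $\g(\cdot,t)$ is represented, via the Pl\"ucker embedding, by the normalized $n$-vector $(F_1\wedge\dots\wedge F_n)/|F_1\wedge\dots\wedge F_n|$. Since $\g$ has unit length, $\dt{\g}$ and $d\g(X)$ are automatically orthogonal to $\g$, and one identifies
\[
T_{\g}\grs{n}{m}\;\cong\;\Big\{\textstyle\sum_i F_1\wedge\dots\wedge\nu_i\wedge\dots\wedge F_n:\ \nu_i\in N_pM\Big\}\;\cong\;\mathrm{Hom}(T_pM,N_pM),
\]
the component along $\g$ itself being annihilated. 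Under this identification the classical Ruh--Vilms computation gives $d\g(F_k)\leftrightarrow\big(F_i\mapsto B_{F_kF_i}\big)$ (using $\p_kF_i=\bn_{F_k}F_i$ and the Gauss formula, the tangential part contributing only a multiple of $\g$), i.e. $d\g$ is the second fundamental form viewed as a section of $\mathrm{Hom}(TM,NM)$.

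First I would compute the left-hand side of \eqref{prv}. Since $\p_tF=H$, we have $\p_tF_i=\p_iH=\bn_{F_i}H$, and by the Weingarten formula $\bn_{F_i}H=-A^H(F_i)+\na_i^\perp H$, whose tangential part $-A^H(F_i)$ is a linear combination of $F_1,\dots,F_n$. Substituting into $\p_t(F_1\wedge\dots\wedge F_n)=\sum_i F_1\wedge\dots\wedge\p_tF_i\wedge\dots\wedge F_n$ and discarding the contribution of the tangential parts --- which produces only terms proportional to $\g$, hence drops out in $T_{\g}\grs{n}{m}$, and here it is essential that $H$ is purely normal --- one obtains, in an orthonormal frame for $g(t)$, that $\dt{\g}$ corresponds to the homomorphism $e_i\mapsto\na_i^\perp H$.

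Next I would compute the right-hand side. Under the same identification the second covariant derivative $\widetilde\na d\g$, with the van der Waerden--Bortolotti connection on the $M$ side, corresponds to $\na B$, so the tension field $\tau(\g(t))=\mathrm{tr}_{g(t)}\widetilde\na d\g$ corresponds to $e_j\mapsto\sum_i(\na_{e_i}B)(e_i,e_j)$. Because the ambient space is flat, the Codazzi equation reads $(\na_XB)(Y,Z)=(\na_YB)(X,Z)$, whence $\sum_i(\na_{e_i}B)(e_i,e_j)=\sum_i(\na_{e_j}B)(e_i,e_i)=\na_j^\perp H$. Thus $\tau(\g(t))$ corresponds to $e_j\mapsto\na_j^\perp H$, which coincides with the expression just obtained for $\dt{\g}$; comparing the two proves \eqref{prv}.

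I expect the step requiring the most care to be the bookkeeping of the identification $T_{\g}\grs{n}{m}\cong\mathrm{Hom}(T_pM,N_pM)$ together with its compatible connections --- in particular verifying that the Levi-Civita connection of $\grs{n}{m}$ pulls back under $d\g$ to the van der Waerden--Bortolotti connection, so that ``$\widetilde\na d\g\leftrightarrow\na B$'' is legitimate --- and keeping track of the fact that all traces are with respect to the time-dependent metric $g(t)$. Both points are standard in the elliptic Ruh--Vilms setting and carry over unchanged; the only genuinely new ingredients are the substitution $\p_tF_i=\p_iH$ and the observation that $H^T=0$.
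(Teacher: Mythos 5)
Your proposal is correct and follows essentially the same route as Wang's argument in \cite{Wan03}, which is the source the paper cites without reproducing the proof. The key ingredients there are exactly the ones you use: the Pl\"ucker identification $T_{\g}\grs{n}{m}\cong\mathrm{Hom}(T_pM,N_pM)$, the Ruh--Vilms correspondence $d\g\leftrightarrow B$ with the compatibility of the Levi--Civita connection on $\grs{n}{m}$ and the van der Waerden--Bortolotti connection, the Codazzi-equation trace giving $\sum_i(\na_{e_i}B)(e_i,e_j)=\na^{\perp}_{e_j}H$, and the first variation $\p_tF_i=\bn_{F_i}H$ with the tangential part dropping out modulo $\g$.
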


Let $\hat h$ be any function on $\grs{n}{m}$. The composition
function $\hat h\circ \g$ of $\hat h$ with the Gauss map $\g$
defines a function on $M_t=F(M, t)$. By (\ref{prv}) we have
$$\dt{}(\hat h\circ\g)=d\hat h\left(\dt{\g}\right)=d\hat h(\tau(\g)).$$
By the composition formula (see \cite{X2}, p.28)
$$\De(\hat h\circ\g)=\text{Hess}(\hat h)(\g_*e_i,\g_*e_i)+d\hat h(\tau(\g)),$$
where $\{e_i\}$ is a local orthonormal frame field on $M_t$. It
follows that
\begin{equation}
\left(\dt{}-\De\right)\hat h\circ\g=-\text{Hess}(\hat
h)(\g_*e_i,\g_*e_i).
\end{equation}

If the slope function $v\le v_0<3,$ from the formulas (2.8), (3.13),
(3.16), Lemma 3.1 and Lemma 3.2 in \cite{JXY}, we have
\begin{equation}\label{eqn-h-c}\aligned
\sum_{i=1}^n{\rm Hess}(v(\cdot, P_0))(d\gamma(e_i),
d\gamma(e_i))\circ \gamma \geq& \ep_0 |B|^2,
\endaligned
\end{equation}
where $\ep_0$ is a positive constant depending only on $v_0$. It follows that
\begin{equation}\label{ev1}
\left(\dt{}-\De\right)v =-\sum_{i=1}^n\text{Hess}(v(\cdot,
P_0))(d\g(e_i),d\g(e_i)\circ \g \le -\ep_0 |B|^2.
\end{equation}
Using maximum principle we conclude that $v\le v_0<3$ remains true
provided it is valid in the initial submanfold.

Let $E_{i\alpha}$ be the matrix with 1 in the intersection of row
$i$ and column $\alpha$ and $0$ otherwise. Then $\sec
\theta_i\sec\theta_\alpha E_{i\alpha}$ form an orthonormal basis of
$T_P \grs{n}{m}$ with respect to the canonical Riemannian metric $g$
on $\grs{n}{m}$. Denote its dual frame by $\omega_{i\alpha}$. Then
$g$ can be written as
\[
g=\sum_{i, \alpha} \omega_{i\alpha}^2.
\]
Denote $\omega_{\beta\alpha}=0$ for $\beta\geq n+1$.

From the formula (3.8) in \cite{XY08}, we get
\begin{equation}\label{eqn-BJ1}\aligned
dv(\cdot, P_0)=\sum_{1\leq j \leq n}\lambda_j v(\cdot,
P_0)\omega_{jj},
\endaligned
\end{equation}
and
\begin{equation}\label{eqn-BJ2}\aligned
v(\cdot, P_0)^{-1}{\rm Hess}(v(\cdot, P_0))=g+
\sum_{\alpha,\beta}\lambda_{\alpha}\lambda_{\beta}(\omega_{\alpha\alpha}\otimes
\omega_{\beta\beta}+\omega_{\alpha\beta}\otimes
\omega_{\beta\beta}).
\endaligned
\end{equation}
By the equalities (\ref{eqn-BJ1}) and (\ref{eqn-BJ2}), we obtain
(see also (3.9) in \cite{DXY} or (2.12) in \cite{DJX21})
\begin{equation}\label{eqn-BJ3}\aligned
{\rm Hess}\log v(\cdot, P_0)=g+ \sum_{1\leq j\leq
n}\lambda_{j}^2\omega_{jj}^2 + \sum_{1\leq i, j\leq n, i\neq
j}\lambda_i\lambda_j\omega_{ij}\otimes \omega_{ji}.
\endaligned
\end{equation}
Since
\begin{equation}\label{eqn-BJ4}\aligned
\omega_{jk}(d\gamma(e_i))= \omega_{jk}(h_{\alpha,il}e_{\alpha l})=
h_{k, ij}.
\endaligned
\end{equation}
Therefore combining (\ref{eqn-BJ3}) with (\ref{eqn-BJ4}), it follows
that
\begin{equation}\label{eqn-BJ5}\aligned
&\sum_{i}{\rm Hess}(\log v(\cdot, P_0))(d\gamma(e_i),
d\gamma(e_i))\circ \gamma \\
=& |B|^2 + \sum_{i, j}\lambda_j^{2}h_{j,ij}^2 + \sum_{i, j\neq
k}\lambda_j\lambda_k
h_{k,ij}h_{j,ik}\\
=& \sum_{\alpha>n,ij}h_{\alpha,ij}^2 + \sum_{i}h_{i,ii}^2 +
\sum_{i\neq j}h_{j,ii}^2 +2\sum_{i\neq j}h_{i,ij}^2 \\
& +\sum_{i,j,k \quad{\rm mutually \quad distinct}}h_{k,ij}^2 +
\sum_{i}\lambda_{i}^2h_{i,ii}^2 + \sum_{i\neq
j}\lambda_i^{2}h_{i,ij}^2 \\
&+ 2\sum_{i\neq j}\lambda_i\lambda_jh_{i,ji}h_{j,ii}+ \sum_{i,j,k
\quad{\rm
mutually \quad distinct}}\lambda_i\lambda_jh_{i,jk}h_{j,ik}\\
=& \sum_{\alpha>n,ij}h_{\alpha,ij}^2 +
\sum_{i}(1+\lambda_i^{2})h_{i,ii}^2 + \sum_{i\neq j}h_{j,ii}^2
+\sum_{i\neq j}(2+\lambda_i^{2})h_{i,ij}^2 \\
&    + 2\sum_{i\neq j}\lambda_i\lambda_jh_{i,ji}h_{j,ii}
+\sum_{i,j,k \quad{\rm mutually \quad
distinct}}\left(h_{k,ij}^2+\lambda_i\lambda_jh_{i,jk}h_{j,ik}\right).
\endaligned
\end{equation}
Direct computation gives us
\begin{equation}\label{eqn-BJ6}\aligned
&\sum_{i,j,k \quad{\rm mutually \quad
distinct}}\left(h_{k,ij}^2+\lambda_i\lambda_jh_{i,jk}h_{j,ik}\right)\\
=& 2\sum_{i<j<k} \left(
h_{k,ij}^2+h_{j,ki}^2+h_{i,jk}^2+\lambda_i\lambda_jh_{i,jk}h_{j,ik}\right.\\
&\left.+\lambda_i\lambda_kh_{i,kj}h_{k,ij}+\lambda_j \lambda_k h_{j,
ki}h_{k,ji}\right).
\endaligned
\end{equation}
If $\sup_{i\neq j} \lambda_i \lambda_j \leq \lambda_0<1$
($\lambda_0$ is a constant) , then by the Cauchy inequality, we
obtain
\begin{equation}\label{eqn-BJ66}\aligned
&\sum_{i,j,k \quad{\rm mutually \quad
distinct}}\left(h_{k,ij}^2+\lambda_i\lambda_jh_{i,jk}h_{j,ik}\right)\\
\geq & 2(1-\lambda_0)\sum_{i<j<k} \left(
h_{k,ij}^2+h_{j,ki}^2+h_{i,jk}^2\right)\\
=& (1-\lambda_0)\sum_{i,j,k \quad{\rm mutually \quad
distinct}}h_{k,ij}^2.
\endaligned
\end{equation}
 The Cauchy inequality implies that
\begin{equation}\label{eqn-BJ11}\aligned
2\lambda_i\lambda_j h_{i,ji}h_{j,ii} \leq 2\lambda_0
h_{i,ji}h_{j,ii} \leq 2\lambda_0h_{i,ji}^2 +
\frac{\lambda_0}{2}h_{j,ii}^2.
\endaligned
\end{equation}
Substituting (\ref{eqn-BJ66}) and (\ref{eqn-BJ11}) into
(\ref{eqn-BJ5}), we get
\begin{equation}\label{eqn-BJ12}\aligned
&\sum_{i}{\rm Hess}(\log v(\cdot, P_0))(d\gamma(e_i),
d\gamma(e_i))\circ \gamma \\
\geq & \sum_{\alpha>n,ij}h_{\alpha,ij}^2 +
\sum_{i}(1+\lambda_i^{2})h_{i,ii}^2 + \sum_{i\neq j}h_{j,ii}^2
+\sum_{i\neq j}(2+\lambda_i^{2})h_{i,ij}^2 \\
&    - \sum_{i\neq j}\left(2\lambda_0 h_{i,ji}^2 +
\frac{\lambda_0}{2} h_{j,ii}^2\right) + (1-\lambda_0)\sum_{i,j,k
\quad{\rm mutually \quad distinct}}h_{k,ij}^2.
\endaligned
\end{equation}
It follows that
\begin{equation}\label{eqn-BJ13}\aligned
&\sum_{i}{\rm Hess}(\log v(\cdot, P_0))(d\gamma(e_i),
d\gamma(e_i))\circ \gamma \\
\geq & \sum_{\alpha>n,ij}h_{\alpha,ij}^2 + \sum_i h_{i,ii}^2 +
\sum_{i, j}\lambda_i^{2}h_{i,ij}^2 + 2(1-\lambda_0)\sum_{i\neq
j}h_{i,ij}^2 \\
&+ (1-\lambda_0)\sum_{i\neq j} h_{j,ii}^2 + (1-\lambda_0)\sum_{i,j,k
\quad{\rm mutually \quad distinct}}h_{k,ij}^2.
\endaligned
\end{equation}
Then we have
\begin{equation}\label{eqn-BJ14}\aligned
\sum_{i}{\rm Hess}(\log v(\cdot, P_0))(d\gamma(e_i),
d\gamma(e_i))\circ \gamma \geq & (1-\lambda_0)|B|^2 +
\sum_{i,j}\lambda_i^{2}h_{i,ij}^2 \\
\geq & (1-\lambda_0)|B|^2.
\endaligned
\end{equation}

It follows that if $\sup_{i\neq j} \lambda_i \lambda_j \leq
\lambda_0<1$ then
\begin{equation}\label{ev2}
\left(\dt{}-\De\right)v=-\sum_{i=1}^n\text{Hess}(v(\cdot,
P_0))(d\g(e_i),d\g(e_i)\circ \g \le -(1-\lambda_0) |B|^2.
\end{equation}
It was proved in \cite{TW} that the the condition $\sup_{i\neq j} \lambda_i \lambda_j \leq \lambda_0<1$ is remained true under the mean
curvature flow  provided it is valid in the initial submanifold.

\begin{remark}\label{eve}

Under the weaker condition $\sup_{i\neq j} \lambda_i \lambda_j \leq \lambda_0<\sqrt{2}$ similar inequality as (\ref{eqn-BJ14}) is   still valid
(see \cite{DJX21}), but it is unknown whether it remains true under the mean curvture flow.

\end{remark}

\section{Curvature estimates}

\medskip

In this section we carry out
local curvature estimates  and obtain corresponding rigidity
results of complete noncompact ancient solutions in higher
codimension under Gauss image restrictions.

\begin{theorem}\label{thm5}

Let $F: M^n\times [-T, 0] \to \mathbb{R}^{m+n}$ be a solution to the
mean curvature flow ($m\geq 2$). If the slope function $v$ satisfies
\[
 v \leq v_0 < 3
\]
on the initial submanifold, where $v_0$ is a constant.
 Then there exists a positive constant $C$ which is independent of $R$ and $T$, such that
\begin{equation}\label{eqn-B1v3}
\sup_{D_{\frac{R}{2}, \frac{T}{2}}(o)} |B |\leq C \left( \frac{1}{R}
+ \frac{1}{\sqrt{T}} \right).
\end{equation}

\end{theorem}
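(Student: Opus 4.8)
The plan is to run a localized parabolic maximum principle of Ecker--Huisken/Schoen type for an auxiliary function built from $|B|^2$ and the slope $v$.

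\emph{Plan and preliminaries.} Since $v\le v_0<3$ on the slice $t=-T$, applying the maximum principle to (\ref{ev1}) gives $v\le v_0$ on all of $M^n\times[-T,0]$; hence the pointwise Hessian estimate (\ref{eqn-h-c}) is available at every point of the flow and $(\partial_t-\Delta)v\le-\ep_0|B|^2$ holds throughout, with $\ep_0=\ep_0(v_0)>0$. From (\ref{eqn-BJ1}) together with (\ref{eqn-BJ4}) one has $|\nabla v|^2=\sum_i(\sum_j\lambda_j v\, h_{j,ij})^2\le v^2(\sum_j\lambda_j^2)|B|^2\le C_0(v_0)|B|^2$, since $v=\prod_j\sqrt{1+\lambda_j^2}\le v_0$ forces $\sum_j\lambda_j^2\le v_0^2-1$. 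We also use $(\partial_t-\Delta)|B|^2\le 3|B|^4-2|\nabla|B||^2$ from (\ref{eqn-VLap-var22bb}), and the identity $(\partial_t-\Delta)|F|^2=-2n$, which follows from $\partial_tF=H$ and $\Delta F=H$ and will supply the spatial cutoff.

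\emph{The auxiliary function.} Set $f=|B|^2\,\Phi(v)$ for a positive, increasing, convex profile $\Phi$ on $[1,v_0]$ to be chosen (for instance $\Phi(v)=(a-v)^{-p}$ with $a\in(v_0,3)$, or $\Phi(v)=e^{\kappa v}$). Combining the three inequalities above,
\[
(\partial_t-\Delta)f\le(3\Phi-\ep_0\Phi')|B|^4-2\Phi|\nabla|B||^2-\Phi''|B|^2|\nabla v|^2-4\Phi'|B|\,\langle\nabla|B|,\nabla v\rangle .
\]
The last three terms make up a quadratic form in $(\nabla|B|,\,|B|\nabla v)$ which is nonpositive once $\Phi\Phi''\ge 2(\Phi')^2$, and the first term is $\le -c_1\Phi|B|^4$ once $\ep_0\Phi'\ge(3+c_1)\Phi$. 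Choosing $\Phi$ (and its parameters) to realize both --- this is the step where the finer information behind (\ref{eqn-h-c}) is needed, see below --- yields a clean differential inequality
\[
(\partial_t-\Delta)f\le -c_1 f^2+\langle Y,\nabla f\rangle,\qquad |Y|\le C_2\sqrt{f},
\]
with $c_1,C_2$ depending only on $n$ and $v_0$; here $Y$ collects any residual first-order term (of size $O(|\nabla v|)=O(\sqrt f)$) and is kept only to be absorbed in the localization.

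\emph{Localization and conclusion.} Fix $\chi$ smooth with $\chi\equiv1$ on $[0,\tfrac14]$ and $\chi\equiv0$ on $[1,\infty)$, and a time cutoff $\theta$ with $\theta(-T)=0$, $\theta\equiv1$ on $[-T/2,0]$, $0\le\theta'\le C/T$; put $\eta=\chi^2(|F|^2/R^2)\,\theta(t)$, so that $\eta\equiv1$ on $D_{R/2,T/2}(o)$, $|\nabla\eta|^2/\eta\le C/R^2$, and $|(\partial_t-\Delta)\eta|\le C(R^{-2}+T^{-1})$ (using $(\partial_t-\Delta)|F|^2=-2n$). Apply the parabolic maximum principle to $G=\eta^2 f$ on $\overline{D_{R,T}(o)}$ (compact, by completeness and properness of $F$ on bounded sets, or after a standard exhaustion). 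Since $G$ vanishes on the parabolic boundary, a positive maximum is attained at some $(x_0,t_0)$ with $\nabla G=0$ and $(\partial_t-\Delta)G\ge 0$; there $\eta^2\nabla f=-2\eta f\nabla\eta$, and inserting the differential inequality for $f$ and absorbing the cutoff terms and $\langle Y,\nabla f\rangle=O(f^{3/2}|\nabla\eta|)$ into $\tfrac34 c_1\eta^2f^2$ via Young's inequality gives $0\le -\tfrac14 c_1(\eta^2f^2)(x_0,t_0)+C(R^{-2}+T^{-1})^2$. Hence $\max_{D_{R,T}(o)}\eta^2f\le C(R^{-2}+T^{-1})$; restricting to $D_{R/2,T/2}(o)$, where $\eta\equiv1$, and using $\Phi\ge\min_{[1,v_0]}\Phi>0$, we obtain $|B|^2\le C(R^{-2}+T^{-1})$, which is (\ref{eqn-B1v3}).

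\emph{Main obstacle.} The delicate point is producing the clean inequality for $f$: the cross term $\Phi'|B|\langle\nabla|B|,\nabla v\rangle$ cannot be handled using only the scalar facts $(\partial_t-\Delta)v\le-\ep_0|B|^2$ and $|\nabla v|^2\le C_0|B|^2$ when $\ep_0$ is small (which is the case as $v_0\to 3$, and already for $v_0\in[2,3)$): completing the square then leaves a term $\asymp C_0\Phi|B|^4$ that no choice of $\Phi$ can absorb against $-\ep_0\Phi'|B|^4$. One must instead return to the component identities of \cite{JXY} behind (\ref{eqn-h-c}) --- in the spirit of the computation leading to (\ref{eqn-BJ14}) --- so that the $\nabla v$-contributions, which are built from the same $h_{k,ij}$ as $\nabla|B|$, are matched against the additional nonnegative terms retained in the refined Hessian estimate rather than discarded. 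A secondary technicality, the attainment of $\max G$, is resolved by completeness together with a standard exhaustion if $F$ is not assumed proper.
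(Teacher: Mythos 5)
Your overall scheme---auxiliary function $f=|B|^2\Phi(v)$ with an increasing convex profile, space-time cutoff, and parabolic maximum principle---is exactly the paper's, and the paper does take $\Phi(v)=e^{kv}$, one of your two candidates. The compactness of $D_{R,T}(o)$ is handled in the paper not by an exhaustion but by observing that $v<3$ gives a positive lower bound on $w=v^{-1}$, so each slice $M_t$ is an entire graph by Proposition~6.1 of \cite{XY08}.

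Where you diverge---and where your proposal is incomplete---is precisely at the step you flagged as the ``main obstacle.'' You tried to render the quadratic form in $(\nabla|B|,\,|B|\nabla v)$ nonpositive by imposing $\Phi\Phi''\geq 2(\Phi')^2$ together with $\ep_0\Phi'\geq(3+c_1)\Phi$, correctly noted that these are incompatible when $\ep_0$ is small, and then suggested ``returning to the component identities of \cite{JXY}'' without carrying this out. The paper never imposes $\Phi\Phi''\geq 2(\Phi')^2$. After the Cauchy step (\ref{eqn-h-d3}) the $2|\nabla|B||^2e^h$ term coming from the Simons-type inequality cancels exactly against the cross term, leaving in (\ref{eqn-h-d4}) only a single residual term $-\tfrac12|B|^2|\nabla h|^2e^h$; this is then never absorbed pointwise into the good $|B|^4$ term. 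Instead, the paper uses $|\nabla h|\leq|B|h$ (see (\ref{eqn-h-d100})) and the fact that $h=kv\geq k$ to write
\[
\tfrac12\phi f\,|\nabla h|^2\ \leq\ \tfrac12\phi f^2\,h^2e^{-h},
\]
which is small compared with $\phi f^2$ because $h^2e^{-h}$ can be driven below any prescribed threshold by taking $k$ large, \emph{after} $f^2=|B|^4e^{2h}$ has been brought into the good term via $e^h>e^{-3k}e^{2h}$ (using $v<3$). In short, the mechanism is that the weight $e^h$ enters $f^2$ quadratically but the bad term only linearly, producing a decaying factor $h^2e^{-h}$ --- not a pointwise sign condition on the quadratic form. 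This specific device is absent from your proposal, and without it the clean inequality $(\partial_t-\Delta)f\leq -c_1f^2+\langle Y,\nabla f\rangle$ that your localization argument presupposes is not established.

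One caution worth voicing: the paper's final choice of constants requires simultaneously $k>h_0(\ep)$ (so that $h^2e^{-h}<\ep/3$ on the range $h\geq k$) and $(k\ep_0-3)e^{-3k}>2\ep$, and these two conditions pull in opposite directions as $\ep\to 0$. You should verify, for the explicit $\ep_0(v_0)$ of \cite{JXY}, that this pair of requirements can actually be met; if not, your instinct that the componentwise structure of (\ref{eqn-BJ5}) must be retained (rather than compressed to the scalar bound $|\nabla v|^2\leq C_0|B|^2$) may indeed be the right route to a complete argument.
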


\begin{proof}
Denote $h:= kv$, where $k$ is a positive constant which will be
determined later. From (\ref{ev1}), we derive
\begin{equation}\label{eqn-h-d}\aligned
\left( \De - \p_t \right) h \geq k\ep_0|B|^2.
\endaligned
\end{equation}
It follows that
\begin{equation}\label{eqn-h-d1}\aligned
\left( \De - \p_t \right) e^h = & e^h |\na h|^2 + e^h \left( \De - \p_t \right) h  \\
  \geq & e^h |\na h|^2 + e^h  k\ep_0|B|^2.
\endaligned
\end{equation}
Recall (\ref{eqn-VLap-var22bb}), we have
$$\aligned
\left( \De - \p_t \right) |B|^2\geq 2|\na |B||^2 - 3|B|^4.
\endaligned
$$
From (\ref{eqn-h-d1}) and (\ref{eqn-VLap-var22bb}), we obtain
\begin{equation}\label{eqn-h-d2}\aligned
\left( \De - \p_t \right) (|B|^2e^h) = & e^h(\De-\p_t)|B|^2 + 2\lan \na|B|^2, \na e^h \ran + |B|^2(\De-\p_t)e^h \\
\geq & e^h \left( 2|\na |B||^2 - 3|B|^4 \right) + 2\lan \na|B|^2, \na e^h \ran \\
& + |B|^2\left(  e^h |\na h|^2 + e^h k\ep_0|B|^2 \right) \\
=& e^h \left[ \left( k\ep_0-3 \right)|B|^4 + 2|\na |B||^2 + |B|^2|\na h|^2 \right] \\
&+ 2\lan \na|B|^2, \na e^h \ran.
\endaligned
\end{equation}
Since
\begin{equation}\label{eqn-h-d3}\aligned
2\lan \na|B|^2, \na e^h \ran =& e^{-h} \lan \na(|B|^2e^h), \na e^h \ran - |B|^2e^h|\na h|^2 + 2|B|e^h \lan\na|B|, \na h \ran \\
\geq& e^{-h} \lan \na(|B|^2e^h), \na e^h \ran - |B|^2e^h|\na h|^2 \\
&-2|\na |B||^2e^h - \frac{1}{2} |B|^2 |\na h|^2e^h \\
=& e^{-h} \lan \na(|B|^2e^h), \na e^h \ran -2|\na |B||^2e^h -
\frac{3}{2} |B|^2 |\na h|^2e^h.
\endaligned
\end{equation}
Therefore from  (\ref{eqn-h-d2}) and (\ref{eqn-h-d3}), we get
\begin{equation}\label{eqn-h-d4}\aligned
\left( \De - \p_t \right) (|B|^2e^h) \geq & e^h \left[ \left( k\ep_0 -3 \right)|B|^4 - \frac{1}{2} |B|^2 |\na h|^2 \right]  \\
&+  e^{-h} \lan \na(|B|^2e^h), \na e^h \ran.
\endaligned
\end{equation}

Define the function on $\mathcal{M}_T$ by
\[
 f = |B|^2 e^h.
\]
Then we have
\begin{equation}\label{eqn-h-d5}\aligned
\left( \De - \p_t \right) f  \geq &  \left( k\ep_0 -3 \right)|B|^4
e^h - \frac{1}{2}  f  |\na h|^2 + \lan \na  f , \na h \ran.
\endaligned
\end{equation}
By the assumption that $v <3$, thus
\[
e^h > e^{-3k}\cdot e^{2h}.
\]
It follows that
\begin{equation}\label{eqn-h-d6}\aligned
\left( \De - \p_t \right) f  \geq &  \left( k\ep_0 -3 \right)e^{-3k}
 f^2 - \frac{1}{2}  f  |\na h|^2 + \lan \na
 f , \na h \ran.
\endaligned
\end{equation}

Let $\eta(r, t): \mathbb{R} \times \mathbb{R} \to \mathbb{R}$ be a
smooth function supported on $[-R, R]\times [-T, 0]$, satisfying the
following properties:
\begin{itemize}
    \item[(1)] $\eta(r, t) \equiv 1$ on $[-\frac{R}{2}, \frac{R}{2}] \times [-\frac{T}{2}, 0]$ and $0\leq \eta \leq 1.$
    \item[(2)] $\eta(r, t)$ is decreasing if $r\geq 0$, i.e., $\p_r \eta \leq 0$.
    \item[(3)] $\frac{|\p_r \eta|}{\eta^a}\leq \frac{C_a}{R}, \frac{|\p_r^{2}\eta|}{\eta^a} \leq \frac{C_a}{R^2}$ for $a=\frac{1}{2}, \frac{3}{4}$.
    \item[(4)] $\frac{|\p_t \eta|}{\eta^{\frac{1}{2}}} \leq \frac{C}{T}$.
\end{itemize}
Such a function was explicitly constructed in \cite{Kun16} (see also \cite{LY}\cite{SZ}). 
Let $\phi:= \eta(r(F), t)$, where $r(F):= |F|$.

Let $L:=-\na h$.
Then we derive
 \begin{equation}\label{eqn-h-d7}\aligned
&\left(\De- \p_t \right)(\phi  f) + \lan L, \na(\phi  f) \ran -2 \left\lan \frac{\na \phi}{\phi}, \na(\phi  f) \right\ran \\
=&  f(\De-\p_t)\phi + \phi (\De-\p_t) f + 2\lan \na \phi, \na f \ran \\
&+ \phi\lan L, \na f \ran + f\lan L, \na \phi \ran - \frac{2|\na \phi|^2 f}{\phi} - 2\lan \na \phi, \na f \ran \\
=& f(\De-\p_t)\phi + \phi (\De-\p_t) f -\phi\lan \na h, \na f \ran -
 f \lan \na h, \na \phi \ran - \frac{2|\na
\phi|^2 f}{\phi}.
\endaligned
\end{equation}
Combining (\ref{eqn-h-d6}) with (\ref{eqn-h-d7}), it follows
\begin{equation}\label{eqn-h-d8}\aligned
&\left(\De- \p_t \right)(\phi  f) + \lan L, \na(\phi  f) \ran -2 \left\lan \frac{\na \phi}{\phi}, \na(\phi  f) \right\ran \\
\geq & \left( k\ep_0 -3 \right)e^{-3k} \phi f^2 +  f(\De-\p_t)\phi \\
&  - \frac{1}{2} \phi f  |\na h|^2 -  f \lan \na \phi , \na h \ran -
\frac{2|\na \phi|^2 f}{\phi}.
\endaligned
\end{equation}

Since
\begin{equation*}\label{eqn-h-d9}
v(x, t) < 3
\end{equation*}
for all $(x, t) \in M^n\times [-T, 0]$. Thus the $w$-function
$w=v^{-1}$ has a positive lower bound on $M_t$ for all $t\in [-T,
0]$, by Propostion 6.1 in \cite{XY08} (see also section 6 in
\cite{XY09}), $M_t$ is an entire graph for any $t\in [-T, 0]$ and
$D_{R, T}(o)$ is compact. Hence $\phi f$ attains its maximum at some
point $F(p_1, t_1)$ in $D_{R, T}(o)$. By the maximum principle, at
$F(p_1, t_1)$, we have
\[
\na(\phi  f) = 0, \quad \De(\phi f) \leq 0, \quad \p_t(\phi
 f) \geq 0.
\]
Thus from (\ref{eqn-h-d8}), we obtain
\begin{equation}\label{eqn-h-d10}\aligned
 \left( k\ep_0 -3 \right)e^{-3k} \phi f^2 \leq & -  f(\De-\p_t)\phi + \frac{1}{2} \phi f  |\na h|^2 \\
 &+  f \lan \na \phi , \na h \ran + \frac{2|\na \phi|^2 f}{\phi}.
\endaligned
\end{equation}
Note that
\begin{equation}\label{eqn-h-d100}
|\na h| \leq |h'(\cdot, P_0)\circ \gamma|\cdot |d\gamma| = k |B|\leq
|B| h,
\end{equation}
where we used $k \leq kv= h (v\geq 1)$ in the last $``\leq"$. This
implies that
\[
\phi f |\na h|^2 \leq \phi f |B|^2h^2 =  \phi |B|^4 e^h h^2 = \phi
f^2 h^2 e^{-h}.
\]
Since
\[
\lim_{h\to +\infty} h^2 e^{-h} = 0.
\]
Hence for any $\ep \in (0, 3)$, there exists a positive constant
$h_0$, such that if $h>h_0$, then we have
\begin{equation}\label{eqn-h-d11}
h^2e^{-h} < \frac{\ep}{3}.
\end{equation}
Thus when $k> h_0$,
\begin{equation*}\label{eqn-h-d111}
h=kv \geq k > h_0,
\end{equation*}
then we arrive at (\ref{eqn-h-d11}). Therefore we obtain
\begin{equation}\label{eqn-h-d12}\aligned
 \frac{1}{2} \phi f  |\na h|^2 \leq\frac{1}{2} \phi f^2 h^2 e^{-h} \leq \frac{\ep}{6} \phi  f^2.
\endaligned
\end{equation}
The Schwarz inequality and (\ref{eqn-h-d100}) imply that
\[
 f \lan \na h, \na \phi \ran \leq  f |\na
h|\cdot |\na \phi| \leq  f |B|h\cdot |\na \phi| = f^{\frac{3}{2}} h
e^{-\frac{h}{2}}|\na \phi|.
\]
From (\ref{eqn-h-d11}), we know that when $k> h_0$, we get
\[
h e^{-\frac{h}{2}}<1.
\]

Let
\[
k > \max \left\{ \frac{3}{\ep_0},  h_0\right\}.
\]
Then we have
\begin{equation}\label{eqn-h-d13}\aligned
 f \lan \na h, \na \phi \ran \leq&   f^{\frac{3}{2}}|\na \phi| =  f^{\frac{3}{2}} \phi^{\frac{3}{4}}\cdot \frac{ |\na \phi|}{\phi^{\frac{3}{4}}}\\
\leq & \frac{\ep}{6} \left(  f^{\frac{3}{2}} \phi^{\frac{3}{4}} \right)^{\frac{4}{3}} + C(\ep) \left( \frac{ |\na \phi|}{\phi^{\frac{3}{4}}} \right)^{4} \\
\leq & \frac{\ep}{6} \phi  f^2 + C( \ep) \frac{1}{R^4}.
\endaligned
\end{equation}
Direct computation gives
\begin{equation*}\label{eqn-main}\aligned
(\De-\p_t) r^2 = (\De-\p_t) |F|^2 = 2n.
\endaligned
\end{equation*}
and
\[
(\De-\p_t) r^2 = 2|\na r|^2 +2r(\De-\p_t) r \geq 2r(\De-\p_t )r,
\]
The above two formulas imply that
\[
(\De-\p_t)r \leq \frac{n}{r}.
\]
It follows that
\begin{equation}\label{eqn-h-d14}\aligned
 - f(\De-\p_t)\phi = & - f\p_r^{2}\eta |\na r|^2 - f\p_r\eta (\De-\p_t) r + f \p_t\eta \\
 \leq & - f\p_r^{2}\eta |\na r|^2 - f\p_r\eta \cdot\frac{n}{r} +  f \p_t\eta \\
 \leq &  f|\p_{r}^2 \eta| +  f |\p_r \eta|\cdot \frac{2n}{R} +  f|\p_t \eta| \\
 =& f\phi^{\frac{1}{2}} \cdot \frac{|\p_r^{2}\eta|}{\phi^{\frac{1}{2}}} + f\phi^{\frac{1}{2}} \cdot\frac{|\p_r \eta| \frac{2n}{R}}{\phi^{\frac{1}{2}}} +  f\phi^{\frac{1}{2}} \cdot \frac{|\p_t\eta|}{\phi^{\frac{1}{2}}} \\
 \leq &  \frac{\ep}{6} \phi f^2 + C(\ep) \left( \frac{|\p_r^{2}\eta|}{\phi^{\frac{1}{2}}} \right)^2 +  \frac{\ep}{6} \phi f^2 +C(\ep) \left(\frac{2n}{R}\right)^2 \cdot\left(\frac{ |\p _r \eta|}{\phi^{\frac{1}{2}}} \right)^2 \\
 &+  \frac{\ep}{6} \phi f^2 +C(\ep) \left( \frac{|\p_t\eta|}{\phi^{\frac{1}{2}}} \right)^2 \\
 \leq &  \frac{\ep}{2} \phi f^2 + C(\ep) \frac{1}{R^4} + C(n, \ep) \frac{1}{R^4} + C(\ep) \frac{1}{T^2}.
\endaligned
\end{equation}
(Note that since $\p_r\eta=0$ for $r\leq \frac{R}{2}$, then we may
assume that $r\geq \frac{R}{2}$ in the second inequality).

The last term of the right hand side of (\ref{eqn-h-d10}) can be
estimated as follows
\begin{equation}\label{eqn-h-d15}\aligned
\frac{2|\na \phi|^2 f}{\phi} =  f\phi^{\frac{1}{2}}\cdot \frac{2|\na
\phi|^2}{\phi^{\frac{3}{2}}} \leq \frac{\ep}{6} \phi f^2 +C(\ep)
\left( \frac{|\na\phi|}{\phi^{\frac{3}{4}}} \right)^4 \leq
\frac{\ep}{6} \phi f^2 +C(\ep) \frac{1}{R^4}.
\endaligned
\end{equation}
Substituting (\ref{eqn-h-d12})--(\ref{eqn-h-d15}) into
(\ref{eqn-h-d10}), we have
\begin{equation}\label{eqn-h-d16}\aligned
 \left( k\ep_0 -3 \right)e^{-3k} \phi f^2 \leq & \ep \phi f^2 + C(n, \ep) \frac{1}{R^4} + C(\ep) \frac{1}{T^2}.
\endaligned
\end{equation}
Choosing $\ep>0$ sufficiently small, such that
\[
 \left( k\ep_0 -3 \right)e^{-3k} - 2\ep > 0.
\]
Then we derive
\begin{equation*}\label{eqn-h-d16}\aligned
 \phi f^2 \leq &C\left( \frac{1}{R^4} + \frac{1}{T^2} \right),
\endaligned
\end{equation*}
where $C>0$ is a constant which is independent of $R$ and $T$. Hence
we obtain
\[
\sup_{D_{\frac{R}{2}, \frac{T}{2}}(o)} |B| \leq
\sup_{D_{\frac{R}{2}, \frac{T}{2}}(o)} |B|e^{\frac{h}{2}}\leq C
\left( \frac{1}{R} + \frac{1}{\sqrt{T}} \right).
\]
\end{proof}

As a consequence, we derive a rigidity result of ancient solutions
as follows.

\begin{theorem}\label{thm-anc6}

Let $F:M^n\times (-\infty, 0] \to \mathbb{R}^{m+n}$ be a complete
ancient solution to the mean curvature flow ($m\geq 2$). If for any
constant $v_0<3$, as $t\to -\infty$, the slope function $v$
satisfies
\[
 v(x, t) \leq v_0 < 3,
\]
then $M_t$ has to be affine linear for any $t\in (-\infty, 0]$.

\end{theorem}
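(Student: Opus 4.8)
The plan is to bootstrap the pointwise estimate of Theorem \ref{thm5}: apply it on time intervals $[t_0-T,t_0]$ of ever increasing length ending at an \emph{arbitrary} fixed time $t_0\in(-\infty,0]$, and then let the spatial radius $R$ and the time length $T$ tend to infinity simultaneously, so that the right-hand side of (\ref{eqn-B1v3}) collapses to $0$ and forces $|B|=0$ along the whole flow.

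Concretely, I would fix $t_0\in(-\infty,0]$, a point $q\in M^n$, and set $o:=F(q,t_0)\in\mathbb{R}^{m+n}$. From the hypothesis one extracts a constant $v_1<3$ and a time $T_1>0$ with $v(\cdot,t)\le v_1$ on $M$ for all $t\le -T_1$. For each $T>T_1+|t_0|$, consider the time-translated flow $\widetilde F(x,s):=F(x,s+t_0)$, $s\in[-T,0]$, which is again a solution of the mean curvature flow and whose slope function on the initial slice $s=-T$ equals $v(\cdot,t_0-T)\le v_1<3$, because $t_0-T<-T_1$. Theorem \ref{thm5} applied to $\widetilde F$ on $B_{R,T}(o)$ then gives, for every $R>0$,
\[
\sup_{\{(x,s):\ \widetilde F(x,s)\in B_{R/2}(o),\ s\in[-T/2,0]\}}|B|\ \le\ C\Big(\tfrac1R+\tfrac1{\sqrt T}\Big),
\]
with $C=C(v_1,n,m)$ independent of $R$, $T$ and $t_0$. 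Since $\widetilde F(q,0)=o\in B_{R/2}(o)$, the point $(q,0)$ --- i.e. $(q,t_0)$ in the original time --- lies in this space-time set, hence $|B|(q,t_0)\le C(1/R+1/\sqrt T)$ for all $R>0$ and all $T>T_1+|t_0|$.

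Letting $R\to\infty$ and $T\to\infty$ forces $|B|(q,t_0)=0$; as $q\in M^n$ and $t_0\in(-\infty,0]$ are arbitrary, the second fundamental form vanishes identically on $M_t$ for every $t\in(-\infty,0]$, so each $M_t$ is totally geodesic in $\mathbb{R}^{m+n}$. To upgrade ``totally geodesic'' to ``affine linear'' I would use that the bound $v\le v_1<3$ persists for all $t\in(-\infty,0]$ --- it propagates forward in time by (\ref{ev1}) and the maximum principle, as noted right after (\ref{ev1}) --- so by Proposition 6.1 in \cite{XY08} each $M_t$ is a complete entire graph; a connected, complete, totally geodesic entire graph in $\mathbb{R}^{m+n}$ is a full affine $n$-dimensional subspace.

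The only real obstacle is ensuring that the constant $C$ supplied by Theorem \ref{thm5} truly does not deteriorate as $T\to\infty$ or as the time window is pushed towards $-\infty$; this is exactly why it matters that $C$ depends only on the slope bound $v_1$ and the dimensions, so that the double limit $R,T\to\infty$ genuinely annihilates $|B|$. A secondary point worth keeping in mind is that the proof of Theorem \ref{thm5} relies on compactness of the space-time domains $D_{R,T}(o)$ in order to run the maximum principle, and this compactness rests on the implication $v<3\Rightarrow M_t$ is an entire graph; nothing extra is needed, but one should make sure that implication is available on each of the long intervals being used.
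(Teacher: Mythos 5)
Your proposal is correct and is exactly the argument the paper intends; the paper simply states Theorem \ref{thm-anc6} ``as a consequence'' of Theorem \ref{thm5} without writing out the limiting argument, and your explicit version — time-translating the flow so the bound $v\le v_1<3$ holds on the initial slice, applying the local estimate (\ref{eqn-B1v3}) centered at $o=F(q,t_0)$, observing that $C$ depends only on $v_1$, $n$, $m$, and sending $R,T\to\infty$ — is the natural way to fill it in. Your remark about upgrading totally geodesic to affine via the entire-graph property (Proposition~6.1 of \cite{XY08}) is also the correct way to conclude the form of $M_t$.
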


\begin{remark}

Guan-Xu-Zhao \cite{GXZ23} showed that the conclusion holds under the
assumption that $v\leq v_1< \sqrt{2}$ and uniformly bounded mean
curvature (see also the related work \cite{Qiu22}).
\end{remark}

In particular, the translating solitons are special ancient
solutions. Hence we have

\begin{theorem}\label{thm-anc7}

Let $M^n$ be a complete $n$-dimensional translating soliton in
$\mathbb{R}^{m+n}$ with codimension $m\geq 2$. If for any constant
$v_0<3$, the slope function  $v$ satisfies
\[
v\leq v_0 < 3,
\]
then $M^n$ is an affine subspace.

\end{theorem}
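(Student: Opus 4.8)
The plan is to view the translating soliton as an eternal solution of the mean curvature flow and then to invoke Theorem \ref{thm-anc6}. Concretely, if $M^n\subset\mathbb{R}^{m+n}$ is a complete translating soliton with constant velocity vector $T_0$, set $M_t:=M+tT_0$ for $t\in(-\infty,0]$. Up to a tangential reparametrization this one-parameter family solves (\ref{eqn-MCF}), and since each $M_t$ is a Euclidean translate of the complete manifold $M$, it is itself complete. Thus $F:M^n\times(-\infty,0]\to\mathbb{R}^{m+n}$ is a complete ancient solution to the mean curvature flow in the sense required by Theorem \ref{thm-anc6}.

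The key observation is that translation by the fixed vector $tT_0$ does not change the tangent planes of the submanifold, so the Gauss map of $M_t$ agrees with that of $M$ under the obvious identification of $M_t$ with $M$. Hence the slope function is independent of time: $v(x,t)=v(x)$ for all $(x,t)\in M^n\times(-\infty,0]$. By hypothesis $v\le v_0<3$ on $M^n$, so $v(x,t)\le v_0<3$ for every $t\le 0$; in particular this bound persists as $t\to-\infty$.

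Therefore the hypotheses of Theorem \ref{thm-anc6} are met, and we conclude that $M_t$ is affine linear for every $t\in(-\infty,0]$. Specializing to $t=0$ gives that $M^n=M_0$ is an affine subspace, which is the assertion.

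As for difficulty, this statement is a genuine corollary, so there is no real obstacle once the setup is recorded. The only point deserving a word of justification is that the Gauss image (equivalently, the slope function) of a translator is constant along the associated flow, and this is immediate because translations act trivially on the Grassmannian-valued Gauss map; all of the analytic content has already been carried out in Theorem \ref{thm5} and its consequence Theorem \ref{thm-anc6}.
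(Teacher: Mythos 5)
Your proposal is correct and takes essentially the same route as the paper, which simply notes that ``the translating solitons are special ancient solutions'' and deduces Theorem \ref{thm-anc7} from Theorem \ref{thm-anc6}. You have merely made explicit the two routine observations the paper leaves implicit: that $M_t:=M+tV_0$ (after tangential reparametrization) is an eternal, hence ancient, solution of (\ref{eqn-MCF}), and that translations fix the Gauss map, so the slope bound $v\le v_0<3$ is time-independent and in particular holds as $t\to-\infty$.
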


\begin{remark}

The first author \cite{Qiu23} showed that the same conclusion holds
under the assumption that $v\leq v_0 < 2$. See also the related
works by \cite{GXZ23}, \cite{Qiu22b}, \cite{Xin15}.

\end{remark}

We also study complete ancient solutions whose Gauss images lie in a compact subset of
the largest geodesic convex subset $B_{JX}(P_0)$ (see
(\ref{eqn-LGCS})), and carry out local estimates of the second
fundamental form.

\begin{theorem}\label{thm-anc8}

Let $F:M^n \times [-T, 0] \to \mathbb R^{m+n}$ be a solution to the
mean curvature flow ($m\geq 2$). Assume that the initial submanifold has bounded slope $v\le v_0<\infty$ and its image
 under the Gauss map $\gamma: M^n \to \grs{n}{m}$ is contained in $B^{\lambda_0}_{JX}(P_0):= \{ P\in \mathbb
 U: \sup_{i\neq j}\lambda_i\lambda_j \leq \lambda_0<1 \}$ of the largest
geodesic convex subset $B_{JX}(P_0)$ as in (\ref{eqn-LGCS}), where
$v_0>0$ and $\lambda_0>0$ are constants. Then there exists a
positive constant $C$ which is independent of $R$ and $T$, such that
\begin{equation}\label{es1}
\sup_{D_{\frac{R}{2}, \frac{T}{2}}(o)} |B| \leq C \left( \frac{1}{R}
+ \frac{1}{\sqrt{T}} \right).
\end{equation}

\end{theorem}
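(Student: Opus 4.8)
The plan is to run the proof of Theorem \ref{thm5} essentially verbatim, with the evolution inequality (\ref{ev1}) replaced by (\ref{ev2}). First I would note that by the result of \cite{TW} the pinching condition $\sup_{i\neq j}\lambda_i\lambda_j\le\lambda_0<1$ is preserved along the flow, so it holds on $M_t$ for every $t\in[-T,0]$; consequently (\ref{ev2}) gives
\[
(\partial_t-\Delta)v\le-(1-\lambda_0)|B|^2\qquad\text{on }M^n\times[-T,0],
\]
and the maximum principle applied to this inequality keeps $v\le v_0$ for all $t$. In particular $w=v^{-1}\ge v_0^{-1}>0$ on every $M_t$, so by Proposition 6.1 of \cite{XY08} each $M_t$ is an entire graph and $D_{R,T}(o)$ is compact, exactly as in the proof of Theorem \ref{thm5}.

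Next, set $h:=kv$ for a positive constant $k$ to be fixed later. Then (\ref{eqn-h-d}) becomes $(\Delta-\partial_t)h\ge k(1-\lambda_0)|B|^2$, and carrying out the computations (\ref{eqn-h-d1})--(\ref{eqn-h-d5}) word for word, with $\ep_0$ replaced by $1-\lambda_0$, yields for $f:=|B|^2e^h$
\[
(\Delta-\partial_t)f\ge\bigl(k(1-\lambda_0)-3\bigr)|B|^4e^h-\tfrac12 f|\nabla h|^2+\langle\nabla f,\nabla h\rangle.
\]
The only other use of the bound ``$v<3$'' in Theorem \ref{thm5} is the elementary estimate $e^h>e^{-3k}e^{2h}$; here, since $h=kv\le kv_0$, one has instead $e^h\ge e^{-kv_0}e^{2h}$, so
\[
(\Delta-\partial_t)f\ge\bigl(k(1-\lambda_0)-3\bigr)e^{-kv_0}f^2-\tfrac12 f|\nabla h|^2+\langle\nabla f,\nabla h\rangle,
\]
which is precisely (\ref{eqn-h-d6}) with the coefficient $(k\ep_0-3)e^{-3k}$ replaced by $\bigl(k(1-\lambda_0)-3\bigr)e^{-kv_0}$.

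From here the argument is identical to that of Theorem \ref{thm5}: introduce the cutoff $\phi=\eta(r(F),t)$ with properties (1)--(4), note that $\phi f$ attains an interior maximum at some $F(p_1,t_1)\in D_{R,T}(o)$ by compactness, apply the maximum principle there, and use the pointwise bound $|\nabla h|=k|d\gamma|\le k|B|\le|B|h$ (valid because $v\ge1$) together with $h^2e^{-h}\to0$ and $he^{-h/2}\to0$ as $h\to+\infty$ to absorb $\tfrac12\phi f|\nabla h|^2$, $f\langle\nabla h,\nabla\phi\rangle$, $-f(\Delta-\partial_t)\phi$ and $2|\nabla\phi|^2f/\phi$ into $\ep\,\phi f^2+C(n,\ep)(R^{-4}+T^{-2})$, exactly as in (\ref{eqn-h-d12})--(\ref{eqn-h-d15}), provided $k>\max\{3/(1-\lambda_0),h_0(\ep)\}$. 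Choosing $\ep$ small so that $\bigl(k(1-\lambda_0)-3\bigr)e^{-kv_0}-2\ep>0$ then gives $\phi f^2\le C(R^{-4}+T^{-2})$, hence $|B|\le|B|e^{h/2}\le C(1/R+1/\sqrt{T})$ on $D_{R/2,T/2}(o)$. The only genuinely new input compared with Theorem \ref{thm5} is that the differential inequality $(\partial_t-\Delta)v\le-(1-\lambda_0)|B|^2$ holds and that the pinching persists under the flow --- both already supplied by (\ref{ev2}) and \cite{TW} in Section 3 --- so I do not anticipate any real obstacle; the proof is a transcription of that of Theorem \ref{thm5} with $\ep_0$ replaced by $1-\lambda_0$ and the constant $3$ in the exponent replaced by $v_0$.
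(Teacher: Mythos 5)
Your proposal matches the paper's proof exactly: the paper likewise sets $h_1=k_1v$, invokes (\ref{ev2}) to get $(\Delta-\partial_t)h_1\ge k_1(1-\lambda_0)|B|^2$, defines $\widetilde f=|B|^2e^{h_1}$, and states that ``by a similar proof of Theorem \ref{thm5}'' one arrives at $(k_1(1-\lambda_0)-3)e^{-k_1v_0}\phi\widetilde f^2\le\ep\phi\widetilde f^2+C(n,\ep)R^{-4}+C(\ep)T^{-2}$, then chooses $\ep$ small and concludes. You have simply spelled out the details that the paper leaves implicit — the preservation of the pinching via \cite{TW}, the maximum principle for $v$, and the replacement $e^h\ge e^{-kv_0}e^{2h}$ — all of which are correct and are exactly what the paper's ``similar proof'' shorthand refers to.
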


\begin{proof}

Denote $h_1:=k_1 v$, where $k_1$
is a positive constant which will be determined later.
It follows that from (\ref{ev2})
\begin{equation}\label{eqn-BJ16}\aligned
(\Delta- \partial_t)h_1 =& k_1(\Delta-\partial_t)v  \geq
k_1(1-\lambda_0)|B|^2.
\endaligned
\end{equation}
Define the function on $\mathcal{M}_T$ by
\[
\widetilde f = |B|^2 e^{h_1}.
\]
By the assumption that $v\leq v_0<\infty$, then by a similar proof
of Theorem \ref{thm5}, we can conclude that for $k_1$ large enough,
\begin{equation}\label{eqn-BJ17}\aligned
 \left( k_1(1-\lambda_0) -3 \right)e^{-k_1 v_0} \phi\widetilde f^2 \leq & \ep \phi\widetilde f^2 + C(n, \ep) \frac{1}{R^4} + C(\ep) \frac{1}{T^2}.
\endaligned
\end{equation}
Choosing $\ep>0$ sufficiently small, such that
\[
 \left( k_1(1-\lambda_0) -3 \right)e^{-k_1 v_0} - 2\ep > 0.
\]
Then we get
\begin{equation*}\label{eqn-h-d16}\aligned
 \phi\widetilde f^2 \leq &C\left( \frac{1}{R^4} + \frac{1}{T^2} \right),
\endaligned
\end{equation*}
where $C>0$ is a constant which is independent of $R$ and $T$. Hence
we derive
\[
\sup_{D_{\frac{R}{2}, \frac{T}{2}}(o)} |B| \leq
\sup_{D_{\frac{R}{2}, \frac{T}{2}}(o)} |B|e^{\frac{h_1}{2}}\leq C
\left( \frac{1}{R} + \frac{1}{\sqrt{T}} \right).
\]
\end{proof}

Consequently, we have the following rigidity result of ancient
solutions.

\begin{theorem}\label{thm-anc9}

Let $F:M^n\times (-\infty, 0] \to \mathbb{R}^{m+n}$ be a complete
ancient solution to the mean curvature flow ($m\geq 2$). If the
initial submanifold has bounded slope $v\le v_0<\infty$ and its
image of the Gauss map is contained in $B^{\lambda_0}_{JX}(P_0):= \{
P\in \mathbb U: \sup_{i\neq j}\lambda_i\lambda_j \leq \lambda_0<1
\}$, where $v_0>0$ and $\lambda_0>0$ are constants. Then $M_t$ has
to be affine linear for any $t\in (-\infty, 0]$.

\end{theorem}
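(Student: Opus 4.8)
\smallskip
\noindent\textbf{Proof proposal.} The plan is to deduce the rigidity statement as a direct consequence of the uniform local curvature estimate of Theorem \ref{thm-anc8}, by exhausting the ancient flow with finite parabolic slabs and sending the spatial and temporal radii to infinity.

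First I would observe that both hypotheses propagate forward along the flow, so that for every $T>0$ the restriction of the ancient solution to the time interval $[-T,0]$ is again a mean curvature flow whose slice $M_{-T}$ at time $-T$ satisfies the assumptions of Theorem \ref{thm-anc8}. Indeed, the pinching $\sup_{i\neq j}\lambda_i\lambda_j\le\lambda_0<1$ is preserved under the flow by \cite{TW} (as recalled after (\ref{ev2})), while $v\le v_0$ is preserved because (\ref{ev2}) gives $(\partial_t-\Delta)v\le-(1-\lambda_0)|B|^2\le 0$, so the parabolic maximum principle keeps $\sup_{M_t}v$ non-increasing. (For an ancient solution the hypotheses are to be read as holding on every time slice, which by this preservation is the same as their holding on $M_{-T}$ for all $T$.) Applying Theorem \ref{thm-anc8} to the flow on $[-T,0]$ then yields, for every $R>0$,
\[
\sup_{D_{\frac{R}{2},\frac{T}{2}}(o)}|B|\le C\Big(\frac{1}{R}+\frac{1}{\sqrt{T}}\Big),
\]
and the point I would stress is that $C$ depends only on $n$, $\lambda_0$ and $v_0$: the auxiliary constant $k_1$, the admissible range of $\ep$, and therefore $C$ itself are all fixed by $\lambda_0$ and $v_0$ through the requirement $(k_1(1-\lambda_0)-3)e^{-k_1v_0}-2\ep>0$ appearing in the proof of Theorem \ref{thm-anc8}, so $C$ is independent not only of $R$ and $T$ but also of which slab $[-T,0]$ was used.

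Finally I would fix an arbitrary point $(p,t_0)$ with $t_0\in(-\infty,0]$. For every $T\ge 2|t_0|$ and $R\ge 2|F(p,t_0)|$ one has $t_0\in[-\frac{T}{2},0]$ and $F(p,t_0)\in B_{\frac{R}{2}}(o)$, hence $(p,t_0)\in D_{\frac{R}{2},\frac{T}{2}}(o)$ and so
\[
|B|(p,t_0)\le C\Big(\frac{1}{R}+\frac{1}{\sqrt{T}}\Big).
\]
Letting $R\to\infty$ and then $T\to\infty$ forces $|B|(p,t_0)=0$; since $(p,t_0)$ was arbitrary, $B\equiv 0$ on the entire ancient flow, so each $M_t$ is a complete totally geodesic submanifold of $\mathbb{R}^{m+n}$ and hence an affine subspace. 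The only genuinely delicate point in this scheme is the uniformity of $C$ over the exhausting slabs as $T\to\infty$; granting it, the limiting argument is routine.
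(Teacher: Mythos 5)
Your proposal is correct and is precisely the argument the paper has in mind: Theorem \ref{thm-anc9} is stated as a "Consequently" after Theorem \ref{thm-anc8}, with the exhaustion argument (letting $R\to\infty$ and $T\to\infty$ in the uniform estimate (\ref{es1})) left implicit. You have simply supplied the routine details, including the two necessary observations that the paper also relies on — preservation of the pinching condition under the flow (the \cite{TW} remark after (\ref{ev2})) and the fact that $C$ in (\ref{es1}) depends only on $n$, $\lambda_0$, $v_0$.
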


An important class of ancient solutions are self-shrinkers,
satisfying a system of quasi-linear elliptic PDE of the second order
\begin{equation}\label{eqn-S111}
H= -\frac{1}{2}X^N,
\end{equation}
where $X$ is the position vector of $M^n$ in $\mathbb R^{m+n}$ and
$X^N$ denotes the orthogonal projection of $X$ onto the normal
bundle of $M^n$.

Recall that $M^n$ is said to be a translating soliton in
$\mathbb{R}^{m+n}$ if it
 satisfies
\begin{equation}\label{eqn-T111}
H= V_{0}^N,
\end{equation}
where  $V_0$ is a fixed vector in $\mathbb{R}^{m+n}$ with unit
length and $V_{0}^N$ denotes the orthogonal projection of $V_0$ onto
the normal bundle of $M^n$.

Self-shrinkers and translating solitons are both significant
examples of ancient solutions to the mean curvature flow.

In \cite{DJX21}, the authors found a 2-bounded subset of $\mathbb U$
defined by
\begin{equation}\label{eqn-T1}
\mathbb T^{2, \Lambda}:= \{ P\in \mathbb U: \lambda_i\lambda_j <
\Lambda \quad {\rm for} \quad {\rm every}\quad i\neq j \}.
\end{equation}
It is easy to see that
\[
\mathbb T^{2,1}= B_{JX}(P_0).
\]
Since $B_{JX}(P_0)$ is the largest geodesic convex subset, therefore
the distance function from $P_0$ is convex on $T^{2,1}$, but it is
no longer convex on $T^{2,\Lambda}$ when $\Lambda>1$.

\begin{theorem}\label{thm-anc10}

Let $M^n$ be a complete $n$-dimensional self-shrinker or translating
soliton in $\mathbb{R}^{m+n}$ with codimension $m\geq 2$ and bounded
slope $v\le v_0<\infty$, where $v_0$ is a positive constant. If the
image of $M^n$ under the Gauss map $\gamma: M^n \to \grs{n}{m}$ is
contained in $\mathbb T^{2, \sqrt 2}_{\Lambda}:= \{ P\in \mathbb U:
\sup_{i\neq j}\lambda_i\lambda_j \leq \Lambda<\sqrt 2 \}$, where
$\Lambda>0$ is a constant. Then $M^n$ is an affine subspace.

\end{theorem}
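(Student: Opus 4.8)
\medskip

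The plan is to treat self-shrinkers and translating solitons uniformly by viewing each as an eternal (hence ancient) solution to the mean curvature flow, and then to run the same local curvature estimate used in Theorems \ref{thm5} and \ref{thm-anc8}, with one crucial modification: since the Gauss image is only assumed to lie in the $\sqrt2$-bounded region $\mathbb T^{2,\sqrt2}_{\Lambda}$ rather than in $B^{\lambda_0}_{JX}(P_0)$, I must replace the evolution inequality (\ref{ev2}) by the analogous one valid under $\sup_{i\neq j}\lambda_i\lambda_j\le\Lambda<\sqrt2$. By Remark \ref{eve}, an inequality of the form (\ref{eqn-BJ14}),
\[
\sum_i\mathrm{Hess}(\log v(\cdot,P_0))(d\gamma(e_i),d\gamma(e_i))\circ\gamma\ge c(\Lambda)|B|^2
\]
with $c(\Lambda)>0$, still holds (this is the content of \cite{DJX21}); hence pointwise $(\partial_t-\Delta)v\le -c(\Lambda)v|B|^2\le -c(\Lambda)|B|^2$ along any MCF whose Gauss image stays in $\mathbb T^{2,\sqrt2}_{\Lambda}$. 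The subtlety flagged in Remark \ref{eve} — that it is \emph{not} known whether $\sup_{i\neq j}\lambda_i\lambda_j\le\Lambda<\sqrt2$ is preserved under MCF — is precisely where being a self-shrinker or translator saves us: for these solutions the flow is, up to the ambient parabolic rescaling (self-shrinkers) or a rigid translation (translators), \emph{stationary}, so the Gauss image of $M_t$ equals that of $M^n$ for all $t$ and the hypothesis is trivially preserved. Concretely, for a translator $M_t=M^n-tV_0$ and the Gauss map is unchanged; for a self-shrinker $M_t=\sqrt{-t}\,M^n$ and again $\gamma(M_t)=\gamma(M^n)$. Likewise the slope bound $v\le v_0<\infty$ persists, giving positivity of $w=v^{-1}$ and, via Proposition 6.1 in \cite{XY08}, that each $M_t$ is an entire graph so that $D_{R,T}(o)$ is compact.

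\medskip

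With these two evolution facts in hand, I would run the barrier argument of Theorem \ref{thm-anc8} verbatim with $\lambda_0$ replaced by the constant controlling $c(\Lambda)$: set $h_1:=k_1 v$, so $(\Delta-\partial_t)h_1\ge k_1 c(\Lambda)|B|^2$; put $\widetilde f:=|B|^2e^{h_1}$ on $\mathcal M_T$; combine with (\ref{eqn-VLap-var22bb}) and absorb the gradient terms using $|\nabla h_1|\le k_1|B|\le h_1|B|$ together with $h^2e^{-h}\to0$, $he^{-h/2}\to0$ as $h\to\infty$; cut off with the spatial-temporal function $\phi=\eta(r(F),t)$ with the properties (1)--(4); evaluate at an interior maximum $F(p_1,t_1)\in D_{R,T}(o)$ where $\nabla(\phi\widetilde f)=0$, $\Delta(\phi\widetilde f)\le0$, $\partial_t(\phi\widetilde f)\ge0$; and choose $k_1$ large and then $\varepsilon>0$ small so that the coefficient $(k_1 c(\Lambda)-3)e^{-k_1v_0}-2\varepsilon>0$. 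This yields $\phi\widetilde f^2\le C(R^{-4}+T^{-2})$ and hence
\[
\sup_{D_{R/2,T/2}(o)}|B|\le C\Big(\frac1R+\frac1{\sqrt T}\Big)
\]
with $C$ independent of $R$ and $T$. Since $M^n$ is eternal, we may take $T\to\infty$ and $R\to\infty$ to conclude $|B|\equiv0$, so $M^n$ is a totally geodesic, hence affine, subspace.

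\medskip

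The main obstacle is legitimizing the curvature estimate under the weaker bound $\Lambda<\sqrt2$: the Hessian lower bound $c(\Lambda)|B|^2$ is no longer as transparent as the $(1-\lambda_0)|B|^2$ computed in (\ref{eqn-BJ5})--(\ref{eqn-BJ14}), because the cubic form $\sum_{i,j,k\ \mathrm{distinct}}(h_{k,ij}^2+\lambda_i\lambda_j h_{i,jk}h_{j,ik})$ is only conditionally positive — one needs the determinant/eigenvalue analysis of the $3\times3$ matrix sketched in the commented-out block (\ref{eqn-BJ7})--(\ref{eqn-BJ10}) of Section 3, invoking Lemma 9.2 of \cite{DJX21}, to see that it dominates $c(\Lambda)\sum h_{k,ij}^2$ with $c(\Lambda)>0$ exactly when $\Lambda<\sqrt2$, together with the corresponding refined handling of the cross term $2\lambda_i\lambda_j h_{i,ji}h_{j,ii}$. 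Once that constant $c(\Lambda)>0$ is extracted from \cite{DJX21}, everything downstream is a mechanical repeat of the proof of Theorem \ref{thm-anc8}; the only genuinely new input beyond that reference is the observation above that the self-shrinker / translator structure makes the otherwise-problematic preservation of the Gauss-image condition automatic.
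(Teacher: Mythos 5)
Your proposal is correct in its key mathematical content, but it takes a genuinely different route from the paper's proof. You phrase everything parabolically: you view the soliton as (the space-time track of) an ancient/eternal solution, run the cutoff argument with $\partial_t-\Delta$ and the space-time weight $\phi=\eta(r(F),t)$ as in Theorems~\ref{thm5} and~\ref{thm-anc8}, and then send $T,R\to\infty$. The paper instead works \emph{elliptically} on $M^n$ itself with the drift operators $\mathcal{L}=\De-\frac12\lan X,\na\cdot\ran$ (shrinker) and $\mathcal{L}_{II}=\De+\lan V_0,\na\cdot\ran$ (translator), a purely spatial cutoff $\eta_1(x)=\widetilde\eta(r(x)/R)$, the estimate $\mathcal{L}_{II}r\le (n+r)/r$, and a single maximum-principle argument on $D_R(o)$ giving $\widetilde f_2(q)\le C(1/R+1/R^2)$, after which $R\to\infty$ already finishes. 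Both approaches rest on the same underlying facts: the Hessian lower bound $\sum_i\mathrm{Hess}(\log v)(d\gamma e_i,d\gamma e_i)\ge\eps|B|^2$ from \cite{DJX21} under $\sup_{i\neq j}\lambda_i\lambda_j\le\Lambda<\sqrt2$, and the fact that the soliton structure makes the geometry time-independent. Your observation that the Gauss image of $M_t$ coincides with that of $M^n$ (scaling invariance for shrinkers, translation invariance for translators), so the a priori non-preserved condition of Remark~\ref{eve} is trivially preserved, is exactly the conceptual point that the paper sidesteps by going elliptic; the drift-Laplacian formulation never needs to raise the preservation question at all. The elliptic route is thus slightly shorter and avoids your (easily reparable) wrinkle for shrinkers: with $M_t=\sqrt{-t}\,M^n$ the slice $t=0$ degenerates to the asymptotic cone, so you should shift time so that some fixed negative slice is $M^n$ before sending $T\to\infty$, and ``eternal'' applies literally only to the translator. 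Neither point is a genuine gap — just minor care needed in the parabolic bookkeeping that the elliptic argument does not require.
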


\begin{proof}
Note that $\sup_{i\neq j}\lambda_i\lambda_j \leq \Lambda < \sqrt 2$.
Then by the formulas (7.3), (7.13) in \cite{DJX21} and
(\ref{eqn-BJ5}), we have
\begin{equation}\label{eqn-T2}\aligned
\sum_{i}{\rm Hess}(\log v(\cdot, P_0))(d\gamma(e_i),
d\gamma(e_i))\circ \gamma \geq \eps |B|^2,
\endaligned
\end{equation}
where $\eps>0$ is a constant which depends on $\Lambda$.

When $M^n$ is a self-shrinker, let $\mathcal{L}:= \De -
\frac{1}{2}\left< X, \na \cdot\right>$. Then by
 Corollary 2.1 in \cite{DXY} and (\ref{eqn-T2}), we get
\begin{equation}\label{eqn-T3}\aligned
\mathcal{L} v = & \sum_{i}{\rm Hess}(v(\cdot, P_0))(d\gamma(e_i),
d\gamma(e_i))\circ \gamma \\
\geq & v \sum_{i}{\rm Hess}(\log v(\cdot, P_0))(d\gamma(e_i),
d\gamma(e_i))\circ \gamma \\
\geq & \eps |B|^2.
\endaligned
\end{equation}
From the formula (2.7) in \cite{DX}, we have
\begin{equation}\label{eqn-TB1}\aligned
\mathcal{L} |B|^2\geq 2|\na|B||^2+|B|^2-3|B|^4\geq
2|\na|B||^2-3|B|^4.
\endaligned
\end{equation}

When $M^n$ is a translating soliton, set  $\mathcal{L}_{II}:= \De +
\left< V_0, \na \cdot\right>$.
 Corollary 6.2 in \cite{Xin15} and the inequality (\ref{eqn-T2}) imply that
\begin{equation}\label{eqn-T33}\aligned
\mathcal{L}_{II} v = & \sum_{i}{\rm Hess}(v(\cdot,
P_0))(d\gamma(e_i),
d\gamma(e_i))\circ \gamma \\
\geq & v \sum_{i}{\rm Hess}(\log v(\cdot, P_0))(d\gamma(e_i),
d\gamma(e_i))\circ \gamma \\
\geq & \eps |B|^2.
\endaligned
\end{equation}
By Proposition 2.1 in \cite{Xin15}, we derive
\begin{equation}\label{eqn-TB2}\aligned
\mathcal{L}_{II} |B|^2 \geq 2|\na|B||^2-3|B|^4.
\endaligned
\end{equation}
From (\ref{eqn-T3}) and (\ref{eqn-T33}) we see that the estimates of
$\mathcal{L} v$ on the self-shrinker and $\mathcal{L}_{II} v$ on the
translating soliton are the same, so are the estimates of
$\mathcal{L} |B|^2$ and $\mathcal{L}_{II} |B|^2$. Hence, in the
following, we only prove the conclusion for translating solitons.

 Denote $h_2:= k_2 v$, where $k_2$ is a positive constant which
will be determined later. From (\ref{eqn-T3}), we get
\begin{equation}\label{eqn-T4}\aligned
\mathcal{L}_{II} h_2 \geq k_2\eps |B|^2.
\endaligned
\end{equation}
Let
\[
f_2:= |B|^2 e^{h_2}.
\]
Then by a similar proof to get (\ref{eqn-h-d5}), we can conclude
that
\begin{equation}\label{eqn-T5}\aligned
\mathcal{L}_{II} f_2 \geq \left(k_2\eps -3\right)|B|^4 e^{h_2} -
\frac{1}{2}f_2 |\na h_2|^2 +\left< \na f_2, \na h_2 \right>.
\endaligned
\end{equation}
By the assumption that $v\leq v_0<\infty$, thus
\[
e^{h_2} \geq e^{-k_2 v_0}e^{2h_2}.
\]
Then from (\ref{eqn-T5}), we obtain
\begin{equation}\label{eqn-T6}\aligned
\mathcal{L}_{II} f_2 \geq \left(k_2\eps-3\right) e^{-k_2 v_0}f_2^{2}
- \frac{1}{2}f_2 |\na h_2|^2 +\left< \na f_2, \na h_2 \right>.
\endaligned
\end{equation}
Denote by $X$ the position vector of the translator and set $r=|X|$.
Then,
\begin{equation}\label{eqn-dist}\aligned
\na r^2 =& 2X^T, \quad \quad |\na r| \leq 1 \\
\mathcal{L}_{II} r^2 =& 2n + 2\left< H, X \right> + \left< V_0^{T}, 2X^T \right> \\
= & 2n + 2\left< V_0^{N}, X \right> + 2\left< V_0^{T}, X \right> \\
=& 2n + 2\left< V_0, X\right> \leq 2n+2r.
\endaligned
\end{equation}
Since
\begin{equation}\label{eqn-dist2}\aligned
\mathcal{L}_{II} r^2 = 2r\mathcal{L}_{II} r +2|\na r|^2 \geq
2r\mathcal{L}_{II} r.
\endaligned
\end{equation}
Therefore from (\ref{eqn-dist}) and (\ref{eqn-dist2}), we get
\begin{equation}\label{eqn-dist3}\aligned
\mathcal{L}_{II} r\leq \frac{n+r}{r}.
\endaligned
\end{equation}

 Let $B_R(o)\subset \mathbb{R}^{m+n}$ be a closed ball of radius
$R$ centered at the origin $o\in \mathbb{R}^{m+n}$ and
$D_{R}(o)=M\cap B_R(o)$. Since the slope function $v\leq
v_0<\infty$, then the $w$-function $w$ has a positive lower bound.
Hence by Proposition 6.1 in \cite{XY08}, $M^n$ is an entire graph
and $D_R(o)$ is compact.

 Let $\widetilde \eta(t)\in C^2[0, +\infty)$ such that
\begin{equation*}
\widetilde \eta(t)=\begin{cases}\aligned
1,\quad &  t\in [0, \frac{1}{2}],\\
0,\quad &  t\in [1, +\infty),
\endaligned
\end{cases}
\end{equation*}
\[
\widetilde \eta(t) \in [0, 1], \quad \widetilde \eta'(t) \leq
0,\quad \eta''(t)\geq -C_0 \quad {\rm and} \quad
\frac{|\widetilde\eta'(t)|^2}{\widetilde \eta(t)}\leq C_0,
\]
where $C_0$ is a positive constant.

Let $\eta_1(x)= \widetilde \eta(\frac{r(x)}{R})$, then the estimates
(\ref{eqn-dist}) and (\ref{eqn-dist3}) imply that
\begin{equation}\label{eqn-eta1}\aligned
\frac{|\na \eta_1|^2}{\eta_1} = \frac{|\widetilde \eta'|^2|\na
r|^2}{R^2\widetilde \eta} \leq \frac{C_0}{R^2},
\endaligned
\end{equation}
\begin{equation}\label{eqn-eta2}\aligned
\mathcal{L}_{II} \eta_1 = \frac{\widetilde \eta' \mathcal{L}_{II}
r}{R} + \frac{\widetilde \eta''|\na r|^2}{R^2}\geq -\sqrt C_0\cdot
\frac{2n+R}{R^2} - \frac{C_0}{R^2}.
\endaligned
\end{equation}
From (\ref{eqn-T6}), we have
\begin{equation}\label{eqn-T7}\aligned
\mathcal{L}_{II} (f_2\eta_1) = & (\mathcal{L}_{II} f_2)\eta_1 +
2\left< \na f_2, \na \eta_1
\right> + f_2\mathcal{L}_{II} \eta_1 \\
 \geq & \left(k_2\eps-3\right) e^{-k_2 v_0}f_2^{2}\eta_1
-
\frac{1}{2}f_2\eta_1 |\na h_2|^2 \\
&+\left< \na f_2, \na h_2 \right>\eta_1 +2\eta_1^{-1}\left<
\na(f_2\eta_1),
\na\eta_1 \right> \\
&-2\eta_1^{-1}f_2|\na\e_1|^2 + f_2\mathcal{L}_{II} \eta_1.
\endaligned
\end{equation}
By (\ref{eqn-eta1})--(\ref{eqn-T7}), we derive
\begin{equation}\label{eqn-T77}\aligned
\mathcal{L}_{II} (f_2\eta_1)
 \geq & \left(k_2\eps-3\right) e^{-k_2 v_0}f_2^{2}\eta_1
-
\frac{1}{2}f_2\eta_1 |\na h_2|^2 \\
&+\left< \na f_2, \na h_2 \right>\eta_1 +2\eta_1^{-1}\left<
\na(f_2\eta_1),
\na\eta_1 \right> \\
&-\frac{2C_0}{R^2}f_2 -\left( \sqrt C_0\cdot \frac{2n+R}{R^2} +
\frac{C_0}{R^2} \right)f_2.
\endaligned
\end{equation}
 The Cauchy inequality and (\ref{eqn-eta1}) imply
\begin{equation}\label{eqn-T8}\aligned
|f_2\left< \na\eta_1, \na h_2 \right>| \leq & f_2|\na\eta_1| |\na
h_2|
\leq f_2|\na h_2|\cdot \frac{\sqrt C_0}{R}\eta_1^{\frac{1}{2}} \\
\leq & \frac{1}{2}f_2 |\na h_2|^2 \eta_1 + \frac{C_0}{2R^2}f_2.
\endaligned
\end{equation}
From (\ref{eqn-T8}), we then obtain
\begin{equation}\label{eqn-T9}\aligned
\left< \na f_2, \na h_2 \right>\eta_1 =& \left< \na (f_2\eta_1), \na
h_2
\right> -f_2 \left< \na \eta_1, \na h_2 \right> \\
\geq & \left< \na (f_2\eta_1), \na h_2 \right> - \frac{1}{2}f_2 |\na
h_2|^2 \eta_1 - \frac{C_0}{2R^2}f_2.
\endaligned
\end{equation}
Combining (\ref{eqn-T77}) with (\ref{eqn-T9}), it follows
\begin{equation}\label{eqn-T10}\aligned
\mathcal{L}_{II} (f_2\eta_1) \geq & \left(k_2\eps-3\right) e^{-k_2 v_0}f_2^{2}\eta_1 -f_2\eta_1 |\na h_2|^2 \\
&+\left< \na (f_2\eta_1), \na h_2 + 2\eta_1^{-1}\na \eta_1 \right>\\
 &-\frac{7C_0}{2R^2}f_2 - \frac{\sqrt C_0(2n+R)}{R^2}f_2.
\endaligned
\end{equation}
By (\ref{eqn-h-d100}) and (\ref{eqn-h-d11}), we know that for any
$\ep > 0$, there exists a positive constant $\widetilde h_0$, when
$k_2> \widetilde h_0$, we have
\begin{equation}\label{eqn-T11}\aligned
f_2\eta_1 |\na h_2|^2 \leq f_2\eta_1 |B|^2 h_2^{2} = |B|^4
e^{h_2}\eta_1 h_2^{2} = f_2^{2}\eta_1 h_2^{2}e^{-h_2} < \ep
f_2^{2}\eta_1.
\endaligned
\end{equation}
Let
\[
k_2 > \max \left\{ \frac{3}{\eps}, \widetilde h_0 \right\}.
\]
 Denote $\widetilde f_2 := f_2\eta_1$. From (\ref{eqn-T10}) and
(\ref{eqn-T11}), we get
\begin{equation}\label{eqn-T12}\aligned
\mathcal{L}_{II} \widetilde f_2 \geq & \left[\left(k_2\eps-3 \right) e^{-k_2 v_0}- \ep \right] \eta_1^{-1}\widetilde f_2^{2}  \\
&+\left< \na \widetilde f_2, \na h_2 + 2\eta_1^{-1}\na \eta_1 \right>\\
 &-\frac{7C_0}{2R^2}\eta_1^{-1}\widetilde f_2 - \frac{\sqrt C_0(2n+R)}{R^2}\eta_1^{-1}\widetilde f_2.
\endaligned
\end{equation}

Since $\left.\widetilde f_2 \right|_{\p_{D_R(o)}}=0$, $ \widetilde
f_2$ achieves an absolute maximum in the interior of $D_R(o)$, say
$\widetilde f_2 \leq \widetilde f_2(q)$ for some $q$ inside
$D_R(o)$.
 We may assume $|B|(q)\neq 0$. By the maximum principle, we have
\[
\na \widetilde f_2 (q) = 0, \quad\quad \mathcal{L}_{II} \widetilde
f_2(q) \leq 0.
\]
Then by (\ref{eqn-T12}), we obtain the following at $q$:
 \begin{equation}\label{eqn-T13}\aligned
0\geq \mathcal{L}_{II} \widetilde f_2 \geq &  \left[\left(k_2\eps-3 \right) e^{-k_2 v_0}- \ep \right] \eta_1^{-1}\widetilde f_2^{2}  \\
 &-\frac{7C_0}{2R^2}\eta_1^{-1}\widetilde f_2 - \frac{\sqrt C_0(2n+R)}{R^2}\eta_1^{-1}\widetilde f_2.
\endaligned
\end{equation}
It follows that
\begin{equation}\label{eqn-T14}\aligned
\left[\left(k_2\eps-3 \right) e^{-k_2 v_0}- \ep \right]\widetilde
f_2(q) \leq \frac{7C_0}{2R^2} + \frac{\sqrt C_0(2n+R)}{R^2}.
\endaligned
\end{equation}
Choosing $\ep$ sufficiently small, such that
\[
\left(k_2\eps-3 \right) e^{-k_2 v_0}- 2\ep >0.
\]
Then we obtain
\begin{equation}\label{eqn-T14}\aligned
\widetilde f_2(q) \leq C\left(\frac{7C_0}{2R^2} + \frac{\sqrt
C_0(2n+R)}{R^2}\right),
\endaligned
\end{equation}
where $C$ is a positive constant which is independent of $R$.

For any $x \in M^n$, we can choose a sufficiently large $R$, such
that $x\in D_{\frac{R}{2}}(o)$. Thus from (\ref{eqn-T14}), we get
 \begin{equation}\label{eqn-T15}\aligned
|B|^2(x) \leq  \sup_{D_{\frac{R}{2}}(o)}\widetilde f_2 \leq
\widetilde f_2(q)  \leq  C\left(\frac{1}{R} + \frac{1}{R^2}\right).
\endaligned
\end{equation}
Letting $R \to +\infty$ in (\ref{eqn-T15}), then we derive $B\equiv
0$. Hence $M^n$ is an affine subspace.
\end{proof}

\begin{remark}

By Jost-Xin's work \cite{JX}, $\mathbb U_2:= \{ P \in \mathbb U:
v(P, P_0) < 2 \} \subset B_{JX}(P_0)=\mathbb T^{2,1}\subset \mathbb
T^{2,\sqrt 2}_{\Lambda}$ and the open geodesic ball of $\grs{n}{m}$
centered at $P_0$ of radius $\frac{\sqrt 2 \pi}{4}$ is contained in
$B_{JX}(P_0)\subset \mathbb T^{2, \sqrt 2}_{\Lambda}$. Thus Theorem
\ref{thm-anc10} improves Corollary 1 and Corollary 2 in
\cite{Qiu23}.


\end{remark}

\vskip24pt

\bibliographystyle{amsplain}

\begin{thebibliography}{10}

\bibitem{ADS19} Angenent, S., Daskalopoulos, P., Sesum, N., Unique asymptotics of ancient convex mean curvature flow solutions, {\it J. Differential Geom.} {\bf 111} (2019), no. 3, 381--455.

\bibitem{ADS20} Angenent, S., Daskalopoulos, P., Sesum, N., Uniqueness of two-convex closed ancient solutions to the mean curvature flow, {\it  Ann. of Math.} {\bf 192} (2020), no. 2, 353--436.


\bibitem{BC19} Brendle, S., Choi, K.,  Uniqueness of convex ancient solutions to mean curvature flow in $\mathbb{R}^3$, {\it Invent. Math.} {\bf 217} (2019), no. 1, 35--76.

\bibitem{BC21} Brendle, S., Choi, K., Uniqueness of convex ancient solutions to mean curvature flow in higher dimensions, {\it  Geom. Topol.} {\bf 25} (2021), no. 5, 2195--2234.

\bibitem{BIS17}  Bryan, P., Ivaki, M. N., Scheuer, J., On the classification of ancient solutions to curvature flows on the sphere, {\it  arXiv:1604.01694v2}, 2017.

\bibitem{BL16} Bryan, P., Louie, J., Classification of convex ancient solutions to curve shortening flow on the sphere, {\it J. Geom. Anal.} {\bf 26 }(2016), no. 2, 858--872.

\bibitem{BLT21} Bourni, T., Langford, M., Tinaglia, G., Collapsing ancient solutions of mean curvature flow, {\it J. Differential Geom.} {\bf 119} (2021), no. 2, 187--219.

\bibitem{CM19} Chini, F., M$\emptyset$ller, N. M., Ancient mean curvature flows and their spacetime tracks, {\it arXiv: 1901.05481v2}, 2019.

\bibitem{CM22} Choi, K., Mantoulidis, C., Ancient gradient flows of elliptic functionals and Morse index, {\it Amer. J. Math.} {\bf 144} (2022),  no. 2, 541--573.

\bibitem{CH21} Choi, K., Haslhofer, R., Hershkovits, O., A nonexistence result for wing-like mean curvature flows in $\mathbb{R}^4$, {\it arXiv: 2105.13100v1}, 2021.

\bibitem{DH21} Du, W., Haslhofer, R., The blowdown of ancient noncollapsed mean
curvature flows, {\it arXiv: 2106.04042v1}, 2021.

\bibitem{DH21b} Du, W., Haslhofer, R., Hearing the shape of ancient noncollapsed flows
in $\mathbb{R}^4$, {\it arXiv: 2107.04443v1}, 2021.

\bibitem{DHS10}
Daskalopoulos, P., Hamilton, R., Sesum, N., Classification of
compact ancient solutions to the curve shortening flow, {\it J.
Differential Geom.} {\bf 84} (2010), no. 3, 455--464.

\bibitem{DJX21} Ding, Q., Jost, J., Xin, Y. L., Minimal graphs of
arbitrary codimension in Euclidean space with bounded 2-dilation,
{\it arXiv: 2109.09383v1}, 2021.

\bibitem{DX} Ding, Q., Xin, Y. L.,  The rigidity theorems of self-shrinkers, {\it Trans. Amer. Math. Soc.} {\bf 366}(2014), no.10, 5067--5085.

\bibitem{DXY} Ding, Q., Xin, Y. L., Yang, L., The rigidity theorems of self shrinkers via Gauss maps, {\it Adv. Math.}, {\bf 303} (2016), 151--174.



\bibitem{GXZ23} Guan, L., Xu, H., Zhao, E., A Bernstein type theorem for ancient solutions to the mean curvature flow in arbitrary codimension, {\it Proc. Amer. Math. Soc.} {\bf 151}(2023), no. 1, 269--279.

\bibitem{HH16} Haslhofer, R., Hershkovits, O., Ancient solutions of the mean curvature flow, {\it Comm. Anal. Geom.} {\bf 24}
(2016), no. 3, 593--604.

\bibitem{HuiSin199}
Huisken, G., Sinestrari, C.,Mean curvature flow singularities for
mean convex surfaces, {\it Calc. Var. Partial Differential
Equations} {\bf 8} (1999), no. 1, 1--14.

\bibitem{HuiSin299}
Huisken, G., Sinestrari, C., Convexity estimates for mean curvature
flow
  and singularities of mean convex surfaces, {\it Acta Math.} {\bf 183} (1999), no. 1, 45--70.



\bibitem{HS15}  Huisken, G., Sinestrari, C., Convex ancient solutions of the mean curvature flow, {\it J. Differential Geom.}
{\bf 101} (2015), no. 2, 267--287.

\bibitem{JX}Jost, J., Xin, Y. L., Berntein type theorems for higher
codimension, {\it Calc. Var. Partial Differential Equations} {\bf 9}
(1999), 277--296.

\bibitem{JXY}Jost, J., Xin, Y. L., Yang, L., The Gauss image of entire graphs of higher codimension and Bernstein type theorems, {\it Calc. Var. Partial Differential Equations} {\bf  47} (2013), no. 3-4, 711--737.

\bibitem{Kun16} Kunikawa, K., A Bernstein type theorem of ancient solution to the mean curvature flow, {\it Proc. Amer. Math. Soc.} {\bf 144}(2016), no. 3, 1325--1333.


\bibitem{Lan17} Langford, M., A general pinching principle for mean curvature flow and applications, {\it Calc. Var. Partial Differential Equations} {\bf 56}(2017), no. 4, Paper No. 107, 31 pp.

\bibitem{Lan17a} Langford, M., Sharp one-sided curvature estimates for mean curvature flow, {\it  arXiv: 1701.05372v1}, 2017.

\bibitem{LL20} Langford, M., Lynch, S., Sharp one-sided curvature estimates for fully nonlinear curvature flows and applications to ancient solutions. {\it J. Reine Angew. Math.} {\bf 765} (2020), 1--33.

\bibitem{LN21} Lynch, S., Nguyen, H. T., Pinched ancient solutions to the high codimension mean curvature flow. {\it Calc. Var. Partial Differential Equations} {\bf 60} (2021), no. 1, Paper No. 29, 14 pp.

\bibitem{LY} Li, P., Yau, S..-T., On the parabolic kernel of the Schr\"odinger operator, {\it Acta Math.} {\bf 156}(1986), 153--201.


\bibitem{LXZ21} Lei, Li, Xu, H., Zhao, E., Ancient solution of mean curvature flow in space forms, {\it Trans. Amer. Math. Soc.} {\bf 374} (2021), no. 4, 2359--2381.


 \bibitem{Qiu22b} Qiu, H. B., A Bernstein type result of translating solitons, {\it Calc. Var. Partial Differential Equations} {\bf 61}(2022), no.6, Paper No. 228, 9 pp.


\bibitem{Qiu22}Qiu, H. B., Rigidity of complete ancient solutions to the mean curvature flow, to appear in {\it Mathematical Research Letters}.

\bibitem{Qiu23} Qiu, H. B., Bernstein type theorems of translating solitons of the mean curvature flow in higher codimension, {\it Submitted}, 2023.


 \bibitem{SS19} Risa, S., Sinestrari, C., Ancient solutions of geometric flows with curvature pinching, {\it J. Geom. Anal.} {\bf 29} (2019), no. 2, 1206--1232.

  \bibitem{SZ} Souplet, P., Zhang, Q. S., Sharp gradient estimate and Yau's Liouville theorem for the heat equation on noncompact manifolds, {\it Bull. London Math. Soc.} {\bf 38}(2006), no. 6, 1045--1053.

\bibitem{TW}  Tsui M. P.,  Wang, M.-T., Mean curvature flows and isotopy of maps between spheres, {\it Comm. Pure
Appl. Math.} {\bf 57} (2004), 1110--1126.

\bibitem{Wan03}
Wang, M.-T., Gauss maps of the mean curvature flow, {\it Math. Res.
Lett.} {\bf 10} (2003), no. 2--3, 287--299.

  \bibitem{Wan11a}
Wang, X.-J., Convex solutions to the mean curvature flow,
 {\it Ann. of Math.} {\bf (2)173} (2011), no. 3, 1185--1239.

 \bibitem{Whi03} White, B., The nature of singularities in mean curvature flow of mean-convex sets, {\it J. Amer. Math. Soc.} {\bf 16}
(2003), no. 1, 123--138.

\bibitem{X2} Xin, Y. L., Geometry of harmonic maps,
Birkh\"auser PNLDE 23, (1996).

\bibitem{Xin08}
Xin, Y. L.,  Mean curvature flow with convex Gauss image, {\it Chin.
Ann. Math.
  Ser. B} {\bf 29} (2008), no. 2, 121--134.

\bibitem{Xin15}
Xin, Y. L.,  Translating solitons of the mean curvature flow, {\it
Calc. Var. Partial Differential Equations
 } {\bf 54} (2015), 1995--2016.

   \bibitem{XY08}
Xin, Y. L.,  Yang, L., Convex functions on Grassmannian manifolds
and Lawson--Osserman problem, {\it Adv. Math.,
  } {\bf 219} (2008), 1298--1326.

   \bibitem{XY09}
Xin, Y. L.,  Yang, L., Curvature estimates for minimal submanifolds
of higher codimension, {\it Chin. Ann. Math.
  Ser. B} {\bf 30} (2009), no. 4, 379--396.

   \bibitem{XY10}
Xin, Y. L.,  Yang, L., Mean curvature flow via convex functions on
Grassmannian manifolds, {\it Chin. Ann. Math.
  Ser. B} {\bf 31} (2010), no. 3, 315--328.



\end{thebibliography}

\end{document}